\documentclass[reqno]{amsart}
\usepackage{amsfonts,amsmath,amsthm,amssymb}
\usepackage[normalem]{ulem}
\usepackage{enumerate,nicefrac}
\usepackage{hyperref,multicol}
\usepackage{rotating}
\usepackage[all]{xy}
\usepackage{tikz}
\usetikzlibrary{arrows, decorations.markings, shapes.geometric}

\tikzstyle{steps} = [rectangle, rounded corners, text width=3cm, minimum height=1.5cm,text centered, draw=black,ultra thick]
\tikzstyle{mid} = [draw=none,text centered]
\tikzstyle{endbox} = [ellipse, minimum width=3cm, text centered, draw=black,very thick]

\usepackage{mathtools}

\newtheorem{theorem}{Theorem}[section]
\newtheorem{lemma}[theorem]{Lemma}
\newtheorem{proposition}[theorem]{Proposition}

\theoremstyle{definition}

\newtheorem{example}[theorem]{Example}

% \usepackage{draftwatermark}
% \SetWatermarkText{{\protect{Draft version; Not for circulation}}}
% \SetWatermarkScale{1.5}

\pagenumbering{arabic}
\usepackage[ansinew]{inputenc}
\usepackage{ifthen}
\usepackage{animate}

\newcommand{\be}{\begin{equation}}
\newcommand{\ee}{\end{equation}}

\title[Integers and their powers in two unrelated number systems]{Represent{i}ng an integer and its powers\\ in two unrelated number systems}

\author{Divyum Sharma}
\address{Department of Mathematics\\
Birla~Inst{i}tute~of~Technology~and~Science, Pilani 333\,031 \textsc{India}}
\email{divyum.sharma\symbol{64}pilani.bits-pilani.ac.in}
\author{L.~Singhal}
\address{Apni Manzil, Nai Sadak, Sajjanon ki Dhani, Kajara 333\,030 \textsc{India}}
\email{lovysinghal\symbol{64}gmail.com}

\begin{document}

\date{}
\subjclass[2020]{11D61, 11J86, 11A63, 11B39}
\keywords{Exponential Diophantine equations, Baker's method, Linear forms in logarithms}

 \begin{abstract}
Let $\alpha$ be a f{i}xed quadrat{i}c irrat{i}onal. Consider the equation 
\[
      y^a\ =\ q_{N_1}+\cdots+q_{N_K},\quad N_1 \geq \cdots \geq N_{K} \geq 0,\quad a, y \geq 2,
\]
where $(q_N)_{N\,\geq\,0}$ is the sequence of convergent denominators to $\alpha$. We find two effective upper bounds for $y^a$ which depend on the Hamming weights of $y$ with respect to its radix and Zeckendorf representations, respect{i}vely. The lat{t}er bound extends a recent result of Vukusic and Ziegler. En route, we obtain an analogue of a theorem by Kebli, Kihel, Larone and Luca.
 \end{abstract}

\maketitle

\section{Introduction}\label{sec_intro}
  \noindent The Fibonacci numbers are defined as $F_0=0,\ F_1 = 1$ and
  \[
    F_{k+2} = F_{k+1} + F_k \text{\ for\ } k \geq 0.
  \]
  It was a long-standing conjecture that $0, 1, 8$ and $144$ are the only Fibonacci Numbers which are perfect powers. This was confirmed by Bugeaud, Mignot{t}e and Siksek using Baker's theory of linear forms in logarithms and the modular method~\cite{BMS_06}. Luca and Patel~\cite{LP18} considered the problem of enumerating perfect powers amongst sums of two Fibonacci numbers. They conjectured that the largest such power is $3864^2=F_{36}+F_{12}$.  More precisely, it was surmised that solutions to
  \begin{equation}\label{E:conj}
    y^a = F_n + F_m,\quad y\geq 1,\ a\geq 2,\ n-2\geq m\geq 2
  \end{equation}
  in terms of $(y,a,n,m)$ all satisfy $n\leq 36$. Indeed, they proved the conjecture if $n\equiv m\pmod{2}$. The case of opposite parity modulo $2$ remains open.\\[-0.1cm]
  
  \noindent In this direct{i}on, Kebli, Kihel, Larone and Luca~\cite{KKLL_21,KL_21} provided explicit upper bounds for $n,m$ and $a$ in terms of $y$. 
  It was also shown in~\cite{KKLL_21} that the $abc$--conjecture implies that~\eqref{E:conj} has only finitely many solutions. We refer to \cite{AHR_21, BRP_22, BL_16, EEKT_24} for some recent results on this theme.\\[-0.1cm]

  \noindent An important feature of the F{i}bonacci sequence is that it coincides with the sequence of convergent denominators to the quadrat{i}c irrat{i}onal $[ 0; \overline{1} ]$. Our f{i}rst main theorem seems to support a principle enunciated by Bugeaud, Cipu and Mignot{t}e~\cite{BCM_13} that ``in two unrelated number systems, two miracles cannot happen simultaneously for large integers.''
     \begin{theorem}\label{Th:Ham2}
    F{i}x integers $K \geq 1,\ \ell \geq 2$ and $b \geq 2$. Let $\alpha$ be a real quadratic irrat{i}onal whose simple continued fraction expansion is given by
    \[
      \alpha=[a_0; a_1, \ldots, a_{r-1}, \overline{b_0, \ldots, b_{s-1}}],\ r \geq 0, s \geq 1.
    \]
   Denote $(q_N)_{N\,\geq\,0}$ to be the sequence of convergent denominators to $\alpha$. There exists an ef{f}ect{i}vely computable $C$ depending only on $\alpha,K,\ell$ and $b$
    such that every solut{i}on
    \[
      ( y, a, m_1, \ldots, m_{\ell}, N_1, \ldots, N_K),\ y, a \geq 2
    \]
    of simultaneous equat{i}ons
    \begin{align}
      y^a\ &=\ q_{N_1}+\cdots+q_{N_K},\quad N_1 \geq \cdots \geq N_{K} \geq 0, \text{ and}\tag{A}\label{E:ya}\\
      y\ &=\ D_1b^{m_1} + \cdots + D_{\ell}b^{m_{\ell}},\quad m_1 > \cdots > m_{\ell} \geq 0,\quad0 < D_i < b\label{E:y2}\tag{B}
    \end{align}
    sat{i}sf{i}es $y^a \leq C$.
    \end{theorem}
    \noindent Ziegler~\cite{Zi_23} proved that for any fixed $y$, the system~\eqref{E:conj} has at most one solution with $a \geq 1$ unless $y=2,3,4,6$ or $10$. Further, there are no solutions to~\eqref{E:conj} when $a \geq 2$ and $y$ is a sum of two Fibonacci numbers  except for $y= 2,3,4,6$ or $10$. Later, Vukusic and Ziegler~\cite{VZ_24} allowed arbitrary $y$ with bounded Hamming weight with respect to the \textit{Zeckendorf representation} (see \S\,\ref{sec_prelim} for def{i}ni{t}ions)
  and found an explicit upper bound for $y^a$ which depends only on the Hamming weight of $y$. More precisely, they proved
  \begin{theorem}
      Let $\epsilon>0$.  There exists an effectively computable constant $C(\epsilon)$ such that if $y,a,m_1,\ldots,m_{\ell},n,m$ are non-negative integers with $a\geq 2$ for which
      \begin{align*}
           y^a\ &=\ F_n+F_m \text{ and}\\
      y\ &=\ F_{m_1} + \cdots + F_{m_{\ell}},
      \end{align*}
    then
        \begin{equation}\label{E:Ziegler_Bound}
            y^a\ \leq\ \exp(C(\epsilon)\cdot \ell^{(3+\epsilon)\ell^2}).
        \end{equation}
  \end{theorem}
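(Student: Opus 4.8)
I would run Baker's method (Matveev's theorem on linear forms in logarithms), spread over several linear forms so that the final bound depends on $\ell$ alone. Write $\alpha=\tfrac{1+\sqrt5}{2}$, $\beta=\tfrac{1-\sqrt5}{2}$, so that $F_k=(\alpha^k-\beta^k)/\sqrt5$ and $\alpha^{k-2}\le F_k\le\alpha^{k-1}$ for $k\ge1$. First dispose of the degenerate cases $y=1$, $m\le2$, and $n-m$ bounded: in each of these $y^a$ differs from a single Fibonacci number by $O(1)$ and so is bounded by classical effective results on perfect powers in binary recurrences (Bugeaud--Mignotte--Siksek~\cite{BMS_06} and relatives). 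Otherwise the two equations give
\[
  \alpha^{m_1-2}\le F_{m_1}\le y\le\ell F_{m_1}\le\ell\,\alpha^{m_1-1},\qquad \alpha^{n-2}\le F_n\le y^a\le 2\alpha^{n-1},
\]
whence $\log y\asymp m_1\log\alpha$ and $n\asymp am_1$; so $m_1$, $n$ and $am_1$ are large or small together. If $m_1$ (hence $y$) is bounded in terms of $\ell$, then the Kebli--Kihel--Larone--Luca bounds for $y^a=F_n+F_m$ in terms of $y$ (\cite{KKLL_21,KL_21}; the $K=2$, $\alpha=[0;\overline1]$ case of the analogue obtained elsewhere in this paper) bound $n$ and $a$, hence $y^a$, in terms of $\ell$. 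So everything reduces to bounding $m_1$ (equivalently $y$) explicitly in terms of $\ell$, and from here on $m_1$ may be taken above any threshold we need.

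Next I would extract a first linear form. From $\sqrt5\,y^a=\alpha^n+\alpha^m-\beta^n-\beta^m$ one has $\bigl|\tfrac{\sqrt5\,y^a}{\alpha^n}-1\bigr|\le3\alpha^{m-n}$, so $\Lambda_1:=a\log y+\tfrac12\log5-n\log\alpha$ satisfies $|\Lambda_1|\le6\alpha^{m-n}$ and is nonzero (else $\sqrt5\,y^a=\alpha^n$, impossible since the right side has nonzero rational part $L_n/2$ while the left is purely irrational). Matveev applied over $\mathbb{Q}(\sqrt5)$ to $\alpha,5,y$, with $h(y)=\log y\asymp m_1$ and largest coefficient $B=n$, gives $\log|\Lambda_1|>-c_0(\log y)\log n$, and comparison forces $n-m\le c_1m_1\log n$. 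A companion form from $\sqrt5\,y^a=\alpha^m(\alpha^{n-m}+1)+O(1)$, namely $a\log y+\tfrac12\log5-m\log\alpha-\log(\alpha^{n-m}+1)$ with the extra algebraic number $\alpha^{n-m}+1$ of height $O(m_1\log n)$, is $O(\alpha^{-m})$, and Matveev then bounds $m$, hence $n$ and $a=O(n/m_1)$, by an explicit polynomial in $m_1$. Thus $y^a=\exp(a\log y)$ is already bounded by an explicit polynomial expression in $m_1$, and only a bound on $m_1$ remains.

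That last bound is the crux, and where I expect the work to be. I would use the Zeckendorf structure: substituting $\sqrt5\,y=\sum_{i=1}^{\ell}\alpha^{m_i}-\sum_{i=1}^{\ell}\beta^{m_i}$ into $(\sqrt5\,y)^a=5^{(a-1)/2}(\alpha^n+\alpha^m-\beta^n-\beta^m)$ and expanding, one matches dominant blocks — the leading term $\alpha^{am_1}$ on the left must agree with $5^{(a-1)/2}\alpha^n$ up to lower order, recovering $am_1-n\asymp a$ — and after subtracting it off the remaining relation has size governed by the next Zeckendorf index $m_2$. Peeling the indices off one at a time (equivalently, an induction on $\ell$), at each stage one is left, after removing the finitely many — for fixed $\ell$, $\mathrm{poly}(m_1)$ many — dominant monomials, with a linear form in $O(\ell)$ logarithms whose coefficients are polynomial in $m_1$ and whose extra heights are controlled by the gaps $m_i-m_{i+1}$; Matveev then forces either the relevant gap or $m_1$ itself to be bounded in terms of the data already in hand. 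After $O(\ell)$ rounds one reaches the base case $\ell\le2$, which is Ziegler's theorem~\cite{Zi_23}, and unwinding the recursion — each round costing a factor of order $\ell^{O(\ell)}$ (Matveev's constant in dimension $O(\ell)$, the polynomially growing heights, and the $\mathrm{poly}(m_1)$ monomials) — produces a bound of the claimed shape $y^a\le\exp\!\bigl(C(\epsilon)\,\ell^{(3+\epsilon)\ell^2}\bigr)$, the $\ell^2$ in the exponent arising from compounding $\asymp\ell$ Matveev estimates each in dimension $\asymp\ell$. The real obstacle is carrying out this peeling in the right order and with tight bookkeeping: because consecutive Zeckendorf gaps $m_i-m_{i+1}$ may be as small as $2$, the sub-leading monomials of $(\sum_i\alpha^{m_i})^a$ are not negligible, so no single usable linear form is at hand, and one must eliminate the monomials one by one while keeping every step's constants small enough that the compounded bound reaches $\ell^{(3+\epsilon)\ell^2}$ and no worse.
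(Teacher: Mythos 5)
Your first stage (bounding $n$, $m$ and $a$ polynomially in terms of $m_1$ via the two forms $a\log y+\tfrac12\log 5-n\log\alpha$ and its companion with $\alpha^{n-m}+1$) is sound and matches the Kebli--Kihel--Larone--Luca / Theorem~\ref{Th:y} part of the argument. The genuine gap is in what you yourself call the crux. Your proposed mechanism --- substitute $\sqrt5\,y=\sum_i\alpha^{m_i}-\sum_i\beta^{m_i}$, expand $(\cdot)^a$, match and then ``remove'' dominant monomials, and induct on $\ell$ down to Ziegler's case $\ell\le 2$ --- does not produce usable linear forms. After subtracting the leading monomial, the remaining identity is a sum of $\binom{a+\ell-1}{\ell-1}$ monomials $\alpha^{e}$ whose relative order depends on the unknown gaps $m_i-m_{i+1}$, so it is not of the shape $\delta_1^{k_1}\cdots\delta_T^{k_T}-1$ with a bounded set of fixed algebraic numbers, and Matveev cannot be applied to it; moreover, deleting a monomial of $\bigl(\sum_{i\le\ell}\alpha^{m_i}\bigr)^a$ does not yield an instance of the original problem with $\ell-1$ Zeckendorf summands, so the claimed induction on $\ell$ with base case Ziegler is not set up. The proofs that actually work (Vukusic--Ziegler, and \S\,\ref{sec_thm1} of this paper for the generalization) never expand the power: they keep the \emph{partial} Zeckendorf sums intact as single algebraic numbers $\sum_{i=1}^v\varphi^{m_i-m_1}$, whose heights are controlled by $m_1-m_v$, form $\Lambda_{B,v}$ and $\Lambda_{A,w}$ as in \eqref{E:AwBv}, and eliminate the large-height logarithm $\log y$ by taking $\Lambda^*_{v,w}=a\Lambda_{B,v}-\Lambda_{A,w}$, a form in a \emph{fixed} number (five) of logarithms. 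The quantitative engine is then a two-index descent over the pairs $(v,w)$: each Matveev application bounds $\min\{m_1-m_{v+1},\,n_1-n_{w+1}\}$ in terms of previously bounded gaps as in \eqref{E:ub_wv_plus1}, and the bounds compound multiplicatively (Lemma~\ref{L:ub}); obtaining the specific shape $\ell^{(3+\epsilon)\ell^2}$ requires the explicit bookkeeping of \cite[\S\,7]{VZ_24}, not the heuristic ``$\ell$ rounds in dimension $\ell$'' count you give.

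A second, independent omission: you verify non-vanishing only for the first form $\Lambda_1$. For the combined forms at the crux, non-vanishing is the delicate point in the Fibonacci setting, precisely because every quantity lies in $\mathbb{Q}(\sqrt5)$; note that this paper's Theorem~\ref{Th:Ham} explicitly excludes $\mathbb{Q}(\alpha)=\mathbb{Q}(\sqrt5)$ so that the conjugation argument of \S\,\ref{SS:nonzero} applies, and Vukusic--Ziegler need a separate argument inside $\mathbb{Q}(\sqrt5)$. Your sketch neither supplies such an argument nor flags the issue, so even granting a workable linear-form setup at the crux, the proof as proposed is incomplete.
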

  \noindent We aim to extend their results along two direct{i}ons simultaneously:
  \begin{theorem}\label{Th:Ham}
    F{i}x integers $K \geq 1$ and $\ell \geq 2$. Let $\alpha$ be a real quadratic irrat{i}onal whose simple continued fraction expansion is given by
    \[
      \alpha=[a_0; a_1, \ldots, a_{r-1}, \overline{b_0, \ldots, b_{s-1}}],\ r \geq 0, s \geq 1
    \]
    and such that $\mathbb{Q} ( \alpha ) \neq \mathbb{Q} ( \sqrt{5} )$. Denote $(q_N)_{N\,\geq\,0}$ to be the sequence of convergent denominators to $\alpha$. There exists an effectively computable constant $C'$ depending only on $\alpha,K$ and $\ell$ such that every solut{i}on
    \[
      ( y, a, m_1, \ldots, m_{\ell}, N_1, \ldots, N_K),\ y,a\geq 2
    \]
    of simultaneous equat{i}ons
    \begin{align}
      y^a &= q_{N_1}+\cdots+q_{N_K},\quad N_1 \geq \cdots \geq N_{K} \geq 0, \text{ and}\notag\\
      y &= F_{m_1} + \cdots + F_{m_{\ell}},\quad m_1 \geq \cdots \geq m_{\ell} \geq 1\label{E:y}\tag{B\textquotesingle}
    \end{align}
    sat{i}sf{i}es $y^a \leq C'$.
  \end{theorem}
  \noindent If so desired, the upper bounds $C$ and $C'$ ment{i}oned above can be wri{t}ten explicitly following our proof. However, it will become evident in \S\,\ref{sec_thm1} that these will be very large for us and far from being sharp. For $K=2$, one can follow the arguments of \cite[\S\,7]{VZ_24} to get bounds similar to~\eqref{E:Ziegler_Bound}. In proving Theorem~\ref{Th:Ham}, we will need a generalizat{i}on of~\cite[Theorem~1]{KKLL_21} which is also of independent interest. Denote
  \[
    \log^+\!x\ =\ \log \max \{ x, 3 \}.
  \]
  \begin{theorem}\label{Th:y}
    Let $\alpha$ be as in Theorem~\ref{Th:Ham} and $( q_N )_{N\,\geq\,0}$ be the sequence of its convergent denominators. Further, let $y,a, N_1, \ldots, N_K$ be integers such that $y^a = q_{N_1} + \cdots + q_{N_K},\ N_1 \geq \cdots \geq N_{K} \geq 0$ and $a, y \geq 2$. Then,
        \[
        a \ll N_1 \ll (\log^+\!y)^{K}(\log^+ \log^+\!y)^{K}
        \]
      where both of the implied mult{i}plicat{i}ve constants above depend on $\alpha, K$ and are ef{f}ect{i}vely computable.
    \end{theorem}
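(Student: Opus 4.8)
The plan is to run Baker's method --- through Matveev's lower bound for linear forms in logarithms --- a total of $K$ times, peeling off from $y^a$ the partial sums $q_{N_1},\ q_{N_1}+q_{N_2},\ \ldots,\ q_{N_1}+\cdots+q_{N_K}$ one term at a time, the exponent $K$ in the final bound arising as the number of these peeling steps.

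First I would collect the facts from \S\,\ref{sec_prelim} that are needed: there are effectively computable $\lambda=\lambda(\alpha)>1$ and $\mu=\mu(\alpha)$ with $0<|\mu|<1$, a fixed number field $\mathbb{L}=\mathbb{L}(\alpha)$ containing them, and a finite family of nonzero $\kappa_1(N),\kappa_2(N)\in\mathbb{L}$ depending only on $N$ modulo a fixed period, with $\kappa_1(N)>0$, such that $q_N=\kappa_1(N)\lambda^N+\kappa_2(N)\mu^N$ and $q_N\asymp_\alpha\lambda^N$. From $q_{N_1}\le y^a=q_{N_1}+\cdots+q_{N_K}\le Kq_{N_1}$ together with $y\ge2$ one gets $y^a\asymp_\alpha\lambda^{N_1}$, hence $a\ll_\alpha N_1$ --- the left-hand inequality --- once the trivial case $N_1=0$ (where $y^a\le K$) is set aside; this also bounds $\max\{a,N_1\}\ll_\alpha N_1$, so the coefficient size in every application of Matveev's theorem below contributes only $\ll_\alpha\log^+\!N_1$. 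Writing $D_j=N_1-N_j$ (so $D_1=0$), for $1\le k\le K$ I set
\[
  \beta_k\ :=\ \sum_{i=1}^{k}\kappa_1(N_i)\,\lambda^{\,D_k-D_i}\ \in\ \mathbb{L},
\]
and note that $\lambda^{N_k}\beta_k=\sum_{i\le k}q_{N_i}-\sum_{i\le k}\kappa_2(N_i)\mu^{N_i}$; since the second sum has modulus $<1$ for $N_1$ large, this gives $\beta_k>0$, shows that $\lambda^{N_k}\beta_k$ is not a rational integer (the second sum being nonzero then), and yields $h(\beta_k)\ll_{\alpha,K}\max\{D_k,1\}$.

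For $1\le k\le K-1$ the first $k$ terms combine as $\sum_{i\le k}q_{N_i}=\lambda^{N_k}\beta_k+O_\alpha(|\mu|^{N_k})$, so that the tail $\sum_{j>k}q_{N_j}\ll_{\alpha,K}\lambda^{N_{k+1}}$ furnishes
\[
  \bigl|\,y^a/(\lambda^{N_k}\beta_k)-1\,\bigr|\ \ll_{\alpha,K}\ \lambda^{-(D_{k+1}-D_k)} .
\]
The form $\Lambda_k=a\log y-N_k\log\lambda-\log\beta_k$ is a linear form in three logarithms of heights $\ll\log y$, $\ll_\alpha1$ and $\ll_{\alpha,K}\max\{D_k,1\}$; by the previous paragraph $\Lambda_k\ne0$ for $N_1$ large, and the displayed estimate makes $|\Lambda_k|$ small as soon as $D_{k+1}-D_k$ exceeds an $\alpha$-dependent constant. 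Matveev's theorem therefore gives $\log|\Lambda_k|^{-1}\ll_{\alpha,K}(\log^+\!y)\max\{D_k,1\}\log^+\!N_1$, and comparing the two bounds on $\Lambda_k$ yields the recursion $D_{k+1}\ll_{\alpha,K}(\log^+\!y)\max\{D_k,1\}\log^+\!N_1$. Since $D_1=0$, iterating gives $D_K\ll_{\alpha,K}(\log^+\!y)^{K-1}(\log^+\!N_1)^{K-1}$.

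The last step is $k=K$: here $y^a=\lambda^{N_K}\beta_K+O_\alpha(|\mu|^{N_K})$ with $\lambda^{N_K}\beta_K\asymp_\alpha\lambda^{N_1}$, so $\bigl|\,y^a/(\lambda^{N_K}\beta_K)-1\,\bigr|\ll_\alpha\lambda^{-N_1}$, while Matveev's theorem applied to $\Lambda_K=a\log y-N_K\log\lambda-\log\beta_K$ --- three logarithms, heights $\ll\log y$, $\ll_\alpha1$, $\ll_{\alpha,K}(\log^+\!y)^{K-1}(\log^+\!N_1)^{K-1}$ --- gives $\log|\Lambda_K|^{-1}\ll_{\alpha,K}(\log^+\!y)^{K}(\log^+\!N_1)^{K}$. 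Hence $N_1\ll_{\alpha,K}(\log^+\!y)^{K}(\log^+\!N_1)^{K}$, and a standard bootstrap of this self-referential inequality turns it into $N_1\ll_{\alpha,K}(\log^+\!y)^{K}(\log^+\!\log^+\!y)^{K}$, which together with $a\ll_\alpha N_1$ is the assertion. I expect the main obstacle to be the bookkeeping of $h(\beta_k)$ --- the price paid for collapsing a partial sum of $k$ convergent denominators into a single logarithm --- and checking that, because this height enters Matveev's bound only linearly while the number of logarithms stays fixed at three, the recursion closes with precisely the exponent $K$ and not a larger one; the assorted degenerate sub-cases (vanishing linear forms, $|\Lambda_k|$ not small, $N_1$ small, or indices inside the pre-period of the continued fraction) all come down to finitely much to verify.
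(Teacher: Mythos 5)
Your overall plan is the paper's own proof in only slightly different clothing: split $(q_N)$ into the $s$ subsequences satisfying the binary recurrence so as to get a Binet-type formula, collapse each partial sum into a single algebraic number ($\beta_k$ here, $\delta_3=\sum_{i\le w}d_ic_1^{(j_i)}\theta_1^{n_i-n_1}$ in the paper) whose logarithmic height is linear in the gap $N_1-N_k$, apply Matveev to a form in three logarithms, iterate the resulting inequality $K$ times, and finish with the Peth\H{o}--de Weger transfer to reach $(\log^+ y)^K(\log^+\log^+ y)^K$, with $a\ll N_1$ coming from the trivial size comparison. The paper's cosmetic differences (merging equal indices into multiplicities $d_i$, dispatching the case of a single distinct index via Peth\H{o}'s theorem on $G_n=wx^a$, normalizing every form by $\theta_1^{n_1}$ rather than by $\lambda^{N_k}$) do not change the substance, and your bookkeeping of $h(\beta_k)$ and of the recursion is the same as the paper's inequality bounding $n_1-n_{w+1}$ by $\max\{n_1-n_w,1\}\log y\log n_1$.

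The one step that does not hold up as written is your justification that $\Lambda_k\neq 0$. You argue that $\lambda^{N_k}\beta_k$ is not a rational integer because the conjugate sum $\sum_{i\le k}\kappa_2(N_i)\mu^{N_i}$ is nonzero and has modulus $<1$ once $N_1$ is large; but that modulus does not shrink with $N_1$ --- it is only $O_{\alpha,K}(1)$ and can exceed $1$ when the lower indices are bounded (e.g.\ several $N_i$ equal to $0$ while $N_1$ is huge) --- and you give no reason why the sum cannot vanish. The missing idea is the conjugate embedding: with $\sigma(\theta_1)=\theta_2$ one checks $\sigma\bigl(c_1^{(j)}\bigr)=-c_2^{(j)}$, so if $y^a=\lambda^{N_k}\beta_k$ then $y^a=\sigma(y^a)=-\sum_{i\le k}\kappa_2(N_i)\mu^{N_i}\ll_{\alpha,K}1$, which is incompatible with $y^a\ge q_{N_1}\gg_{\alpha}\lambda^{N_1}$ unless $N_1\ll_{\alpha,K}1$; note that you do not need the form never to vanish, only that vanishing forces $N_1$ (hence everything, via $a\ll N_1$) to be bounded, which is exactly how the paper treats its case $\Gamma_{A,w}=1$. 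The same conjugation argument also shows the $\mu$-sum is nonzero, so your ``integer minus a small nonzero number'' heuristic is salvageable, but only through this Galois step, which your write-up never invokes. With that repair (and the routine handling of indices inside the pre-period, which you flag), the proof closes as you describe.
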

    \noindent Consequently, there are only finitely many powers $y^a$ which can be wri{t}ten as a sum of $K$ terms from the sequence $( q_N )_{N\,\geq\,0}$ once we have f{i}xed  integers $y\geq 2$ and $K\geq 1$. These powers can also be effectively listed in principle.\\[-0.1cm]

    \noindent We note that for every real quadratic irrational, the sequence of its convergent denominators satisfies a linear recurrence relation. However, the corresponding characteristic polynomial may not have a dominating root. We overcome this technical di{f}f{i}culty with the help of an idea from Peth\"{o}~\cite{Petho_81} (see also Lenstra and Shallit~\cite{LeSh93}). The idea is to split the sequence $( q_N )$ into finitely many subsequences, each satisfying the same binary recurrence relation albeit with possibly di{f}ferent ini{t}ial terms (see \S\,\ref{sec_prelim} and also~\cite{MS24}). Af{t}erwards, Baker-type estimates for linear forms in logarithms have been used to establish the above theorems. To prove Theorem \ref{Th:Ham}, we use elimination of unknown logarithms and an induction argument building upon~\cite{VZ_24,Zi_19}. In Sect{i}on~\ref{sec_prelim}, we record some preliminary results. Theorem~\ref{Th:y} is dealt with in Sect{i}on~\ref{sec_thm2} and a complete proof of Theorem~\ref{Th:Ham} is given in Sect{i}on~\ref{sec_thm1}. We conclude with an abridged proof of Theorem~\ref{Th:Ham2} in the last sect{i}on.\\[-0.1cm]

    \noindent Throughout the paper, $c_1,c_2,\ldots$ denote effectively computable positive numbers depending only on given $\alpha,K,\ell$ and $b$.

\section{Preliminaries}\label{sec_prelim}
  \noindent This sect{i}on will serve as our arsenal where we collect various techniques and tools which may be useful later.
  \subsection{Numerat{i}on systems}\label{SubS:numsys} Consider any real irrat{i}onal number $\alpha$. In 1922, Ostrowski~\cite{Os_1922} proved that the sequence of convergent denominators with respect to its simple continued fraction expansion forms a \textit{basis} for a numeration system. More precisely, he established that
\begin{theorem}[Ostrowski]
  Let $\alpha = [ a_0; a_1,\ldots]$ and $( q_i )_{i\,\geq\,0}$ be the corresponding sequence of convergent denominators. Then, every non-negative integer $n$ can be expressed uniquely as
  \begin{equation}\label{decomp}
	n=\sum\limits_{0\,\leq\,i\,\leq\,l} \epsilon_{i} q_i,
  \end{equation}
  where integers $\epsilon_{i}$'s sat{i}sfy
	\begin{enumerate}[i)]
	\item $0\leq \epsilon_{0}<a_1$,\label{I:init}
	\item $0\leq \epsilon_{i}\leq a_{i+1}$ for $i\geq 1$, and\label{I:ub}
	\item for all $i\geq 1$, $\epsilon_{i} = a_{i+1}$ implies that  $\epsilon_{i-1}=0$.\label{I:full}
	\end{enumerate}
  \end{theorem}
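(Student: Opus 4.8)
The plan is to prove existence by a greedy construction and uniqueness by a sharp bound on admissible partial sums. Throughout I use the standard recurrence $q_{-1}=0$, $q_0=1$, $q_{i+1}=a_{i+1}q_i+q_{i-1}$ for $i\geq 0$, so that $q_1=a_1$ and $(q_i)_{i\geq 0}$ is non-decreasing, with $q_{i-1}\leq q_i$ for all $i\geq 1$. I will call a tuple $(\epsilon_0,\ldots,\epsilon_l)$ of non-negative integers \emph{admissible} if it satisfies conditions (i)--(iii); note that truncating an admissible tuple to a shorter initial segment again yields an admissible tuple.

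The cornerstone is the inequality: if $(\epsilon_0,\ldots,\epsilon_l)$ is admissible then $\sum_{i=0}^{l}\epsilon_i q_i\leq q_{l+1}-1$. I would prove this by induction on $l$. For $l=0$ it reads $\epsilon_0 q_0=\epsilon_0<a_1=q_1$, which is clause (i). For the inductive step, if $\epsilon_l\leq a_{l+1}-1$ then the inductive hypothesis applied to $(\epsilon_0,\ldots,\epsilon_{l-1})$ gives $\sum_{i=0}^{l}\epsilon_i q_i\leq (a_{l+1}-1)q_l+(q_l-1)=a_{l+1}q_l-1<q_{l+1}$; and if $\epsilon_l=a_{l+1}$ then clause (iii) forces $\epsilon_{l-1}=0$, so the inductive hypothesis applied to $(\epsilon_0,\ldots,\epsilon_{l-2})$ gives $\sum_{i=0}^{l}\epsilon_i q_i\leq a_{l+1}q_l+(q_{l-1}-1)=q_{l+1}-1$.

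For existence I would induct on $n$, the case $n=0$ being trivial. Given $n\geq 1$, let $l$ be the largest index with $q_l\leq n$, set $\epsilon_l=\lfloor n/q_l\rfloor\geq 1$ and $n'=n-\epsilon_l q_l$, so $0\leq n'<q_l$. From $n<q_{l+1}=a_{l+1}q_l+q_{l-1}$ together with $q_{l-1}\leq q_l$ one deduces $\epsilon_l\leq a_{l+1}$, and if $\epsilon_l=a_{l+1}$ then in fact $n'<q_{l-1}$. Applying the inductive hypothesis to $n'<n$ produces an admissible expansion of $n'$ whose top index is at most $l-1$ (and at most $l-2$ when $\epsilon_l=a_{l+1}$, in which case $\epsilon_{l-1}=0$, so (iii) holds at $i=l$); appending the term $\epsilon_l q_l$ then gives an admissible expansion of $n$, all of whose constraints are either inherited from the expansion of $n'$ or were just checked at position $l$. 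Uniqueness now falls out of the cornerstone inequality: in any admissible expansion of $n\geq 1$ we must have $\epsilon_i=0$ for $i>l$ (otherwise the tail alone is at least $q_{l+1}>n$), and then $n=\epsilon_l q_l+\sum_{i<l}\epsilon_i q_i$ with $0\leq\sum_{i<l}\epsilon_i q_i\leq q_l-1$ forces $\epsilon_l=\lfloor n/q_l\rfloor$; the remaining digits are determined by induction.

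I expect the only genuinely delicate point to be the bookkeeping around clause (iii): one must verify that a maximal leading digit $\epsilon_l=a_{l+1}$ not only forces $\epsilon_{l-1}=0$ but also leaves a remainder small enough, $n'<q_{l-1}$, for the recursion to close, and conversely that the recursive step never reinstates a forbidden configuration. This is precisely where the exact shape of the recurrence $q_{l+1}=a_{l+1}q_l+q_{l-1}$ is used; everything else is a routine induction in the classical style (cf. Allouche--Shallit, or Rockett--Sz\"usz).
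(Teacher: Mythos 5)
Your proof is correct, and it is the standard argument: the paper itself does not prove Ostrowski's theorem but only cites \cite{AS} (Theorem 3.9.1), so your greedy-plus-sharp-bound proof is essentially a self-contained version of the classical route. Your ``cornerstone'' inequality is exactly the strict bound the paper records as~\eqref{greediness}, namely $\sum_{i=0}^{j}\epsilon_i q_i<q_{j+1}$ for admissible digit strings, and your two-case induction on $l$ (using $q_{l+1}=a_{l+1}q_l+q_{l-1}$ and clause~(iii) in the case $\epsilon_l=a_{l+1}$) establishes it correctly, including the base case $\epsilon_0<a_1=q_1$. The greedy existence step and the uniqueness step via ``digits above $l$ vanish, then $\epsilon_l=\lfloor n/q_l\rfloor$ is forced, then recurse'' are both sound. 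One small point you should make explicit: when you apply the inductive hypothesis to $n'$ and assert that its expansion has top index at most $l-1$ (respectively $l-2$ when $\epsilon_l=a_{l+1}$), the hypothesis as stated only yields \emph{some} admissible expansion of $n'$; you need the one-line observation that any nonzero digit at an index $j$ with $q_j\geq q_l>n'$ (respectively $q_j\geq q_{l-1}>n'$) would already exceed $n'$, or else build this bound into the induction statement. With that remark added, the argument is complete; also take care with the degenerate instances $l\in\{0,1\}$ of the cornerstone induction (empty truncations) and with $a_1=1$, where $q_0=q_1=1$ and clause~(i) forces $\epsilon_0=0$ --- your setup handles these, but they deserve a word.
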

  \noindent Observe that Condition~\eqref{I:full} ensures that the recursive relation
  \[
    q_{i+1}=a_{i+1}q_{i}+q_{i-1}
  \]
  cannot be used to replace a linear combinat{i}on of summands with another summand. In fact, the three conditions above are equivalent to saying that
 \begin{equation}\label{greediness}
     \sum_{i=0}^{j}\epsilon_{i} q_{i}< q_{j+1}\quad\text{for all } 0 \leq j \leq l.
 \end{equation} 
 We refer the reader to~\cite[Theorem 3.9.1]{AS} for a proof. The expression~\eqref{decomp} has been thereaf{t}er called the \textit{Ostrowski $\alpha$-representation} of $n$. When $\alpha$ is the golden rat{i}o, the sequence of convergent denominators coincides with the sequence of Fibonacci numbers and the Ostrowski $\alpha$-numeration system is known as the \textit{Zeckendorf numeration system} \cite{Ze_72}.\\[-0.1cm]

 \noindent A major dist{i}nct{i}on between Ostrowski $\alpha$-numerat{i}on with respect to an arbitrary $\alpha$ and the Zeckendorf numerat{i}on is with regards to the minimality property. Given any representat{i}on $F_{j_1} + \cdots + F_{j_{\ell}}$ of some non-negat{i}ve integer $n$ where $F_{j}'s$ need not be dist{i}nct and may appear along with either or both of their neighbouring members in the F{i}bonacci sequence, we can write $n$ as a Zeckendorf sum with at most $\ell$ terms~\cite{CHHM+}. Af{t}er replacing length by \textit{sum of digits}, the analogous statement need not be true for general Ostrowski numerat{i}on systems.
 \begin{example}
   Let $\alpha = [ 0; 3, 1, *, *, *, \ldots ]$ so that the sequence of convergent denominators looks like $1 = q_0, 3, 4, \ldots$ We may then express
   \[
     6 = 2 \times q_1
   \]
   so that the corresponding `digital sum' is $2$. As the coef{f}icient of convergent denominator $q_1$ is more than the next part{i}al denominator $a_2=1$, this is not an Ostrowski $\alpha$-representat{i}on of $6$. The same is actually given by $1 \times q_2 + 2 \times q_0$ with sum-of-digits being $1 + 2 = 3$.
 \end{example}
 \noindent It, therefore, becomes prudent for us to consider more general sums of convergent denominators in~\eqref{E:ya} rather than Ostrowski sums when $\alpha \neq ( 1 + \sqrt{5} ) / 2$. On the other hand, we may restrict ourselves to the Zeckendorf decomposi{t}ion of $y$ in~\eqref{E:y} without any loss of generality. This in part{i}cular implies that we can henceforth assume
 \begin{equation}
   m_{j}\ \geq\ m_{j + 1} + 2 \text{ for all } 0 \leq j < \ell.
 \end{equation}
 In addi{t}ion, we may part{i}t{i}on the f{i}nite sequence $N_1, \ldots, N_K$ as
 \[
   N_1 = \cdots = N_{d_1} > N_{d_1 + 1} = \cdots = N_{d_1 + d_2} > \cdots > N_{d_{k - 1} + 1} = \ldots = N_K
 \]
 and collect coe{f}f{i}cients of same convergent denominators to rewrite~\eqref{E:ya} in the form
 \begin{equation}\label{E:yaf9}
   y^a\ =\ d_1q_{N'_1} + \cdots + d_kq_{N'_k},\quad N'_1 > N'_2 > \ldots > N'_k \geq 0.
 \end{equation}
 Note that $d_j \geq 1$ for all $1 \leq j \leq k$ and $\sum_{j = 1}^k d_j = K$ whereby $k \leq K$.
 
  \subsection{Quadrat{i}c irrat{i}onals} Let $\alpha$ be a real quadrat{i}c irrat{i}onal. Lagrange's theorem tells us that its simple cont{i}nued fract{i}on can be wri{t}ten as an eventually periodic sequence
  \[
    \alpha = [ a_0; a_1, \ldots,a_{r-1},\overline{b_0,\ldots,b_{s-1}}],\quad r\geq 0,\ s\geq 1.
  \]
  By~\cite[pg.~352]{LeSh93} (see also \cite[Lemma 2]{Petho_81}), the sequence $(q_{i})_{i\geq 0}$ of convergent denominators to $\alpha$ sat{i}sf{i}es the linear recursive relat{i}on
    \begin{equation}\label{rr_qn}
    q_{i+2s} = t_{\alpha}q_{i+s} - (-1)^sq_{i}\quad\text{ for all } i\geq r,  
    \end{equation}
    where 
    \begin{equation}\label{t_alpha}
    t_{\alpha}=\textrm{trace} \left(\prod_{0\leq j<s}\begin{pmatrix}
    b_j & 1\\
    1 & 0
    \end{pmatrix}\right).
    \end{equation} 
The next lemma gives a Binet-type formula for such convergent denominators.
\begin{lemma}\label{Binet-type}
   Let $\alpha$ be a real quadrat{i}c irrat{i}onal with simple cont{i}nued fract{i}on expansion  $[ a_0; a_1, \ldots,a_{r-1},\overline{b_0,\ldots,b_{s-1}}],\ r \geq 0, s \geq 1$. For $j=0,\ldots,s-1$, we re-index as $q^{(j)}_{i} := q_{j + r + si},\ i\geq 0$. 
   \begin{enumerate}[1)]
       \item Each of the result{i}ng subsequences $\big( q_{i}^{(j)} \big)_{i\,\geq\,0}$ sat{i}sfy\label{I:sub}
       \begin{equation}\label{E:binet_rep}
         q^{(j)}_{i}=c_{1}^{(j)}\theta_{1}^{i}-c_{2}^{(j)}\theta_{2}^i\quad\text{for all } i \geq 0,
       \end{equation}
       where 
    \begin{align*}
        \theta_{1}&=\frac{t_{\alpha}+\sqrt{t_{\alpha}^2-4(-1)^s}}{2},\  \theta_{2} = \frac{t_{\alpha}-\sqrt{t_{\alpha}^2-4(-1)^s}}{2},\\
        c^{(j)}_{1}&=\frac{q^{(j)}_{1}-\theta_{2}q^{(j)}_{0}}{\theta_{1}-\theta_{2}}\quad\!\text{and}\quad\!c_{2}^{(j)} = \frac{q^{(j)}_{1}-\theta_{1}q^{(j)}_{0}}{\theta_{1}-\theta_{2}}.
    \end{align*}
       \item There exist positive constants $c_{3}, c_{4}$ and $N_{0}$ depending only on $\alpha$ such that for every $j = 0, \ldots, s - 1$,
       \begin{equation}\label{qn_ub_lb}
         q^{(j)}_{i} \leq c_{3} \theta_{1}^{i}\ \textrm{ for }\ i \geq 0\ \textrm{ and }\ q^{(j)}_{i} \geq c_{4} \theta_{1}^{i} \textrm{ for }\ i \geq N_{0}.
       \end{equation}
    \end{enumerate}
  \end{lemma}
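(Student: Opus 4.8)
The plan is first to show that each re-indexed subsequence $\bigl( q^{(j)}_i \bigr)_{i\,\geq\,0}$ obeys a genuine \emph{binary} linear recurrence, whence~\eqref{E:binet_rep} follows from the usual closed-form solution, and then to read the estimates~\eqref{qn_ub_lb} off from that closed form.

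\emph{Part~1).} Fix $j \in \{0, \ldots, s-1\}$. Substituting the index $i \mapsto j + r + si$ into~\eqref{rr_qn}, which is legitimate because $j + r + si \geq r$ for every $i \geq 0$, gives
\[
  q^{(j)}_{i+2}\ =\ t_{\alpha}\, q^{(j)}_{i+1} - (-1)^s q^{(j)}_{i}, \qquad i \geq 0 .
\]
So all the subsequences satisfy the same homogeneous recurrence, with characteristic polynomial $X^2 - t_{\alpha} X + (-1)^s$ whose roots are precisely the $\theta_1, \theta_2$ in the statement. The one thing needing justification is that these roots are distinct. If the discriminant $t_{\alpha}^2 - 4(-1)^s$ were $\leq 0$, both roots would have modulus $\sqrt{|(-1)^s|} = 1$, and $q^{(j)}_i$, being a real combination of $\theta_1^{\,i}$ and $\theta_2^{\,i}$ (with an extra factor $i$ in the repeated-root case), would grow at most linearly in $i$; this contradicts $q^{(j)}_i = q_{j+r+si} \geq F_{j+r+si+1}$, which grows geometrically (the bound $q_n \geq F_{n+1}$ being immediate by induction from $q_{n+1} \geq q_n + q_{n-1}$). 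Hence $\theta_1 \neq \theta_2$, both roots are real, and the general solution of the recurrence has the form $q^{(j)}_i = A\theta_1^{\,i} + B\theta_2^{\,i}$; imposing the initial values at $i = 0,1$ and solving the resulting $2 \times 2$ system produces exactly the claimed $c_1^{(j)}, c_2^{(j)}$, giving~\eqref{E:binet_rep}. (For $s$ odd, distinctness is clear at once since $t_\alpha^2 + 4 > 0$; for $s$ even one could alternatively note that $t_\alpha$ is the trace of the product in~\eqref{t_alpha}, each factor of which entrywise dominates $\begin{pmatrix} 1 & 1 \\ 1 & 0\end{pmatrix}$, so $t_\alpha \geq F_{s+1} + F_{s-1} \geq 3$ and the discriminant is $\geq 5$.)

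\emph{Part~2).} Relabel, if necessary, so that $\theta_1$ is the root of larger modulus; since $|\theta_1 \theta_2| = |(-1)^s| = 1$ and $\theta_1 \neq \theta_2$, this gives $|\theta_2| = 1/\theta_1 < 1 < |\theta_1|$, and then $\theta_1 + \theta_2 = t_\alpha > 0$ forces $\theta_1 > 1$ (a sign check confirms this is the ``$+$'' root displayed in the statement). The upper bound in~\eqref{qn_ub_lb} follows immediately: for each $j$ and every $i \geq 0$,
\[
  q^{(j)}_i\ =\ c_1^{(j)}\theta_1^{\,i} - c_2^{(j)}\theta_2^{\,i}\ \leq\ \bigl( |c_1^{(j)}| + |c_2^{(j)}| \bigr)\theta_1^{\,i},
\]
since $|\theta_2|^{\,i} \leq 1 \leq \theta_1^{\,i}$; take $c_3 = \max_{0 \leq j < s}\bigl( |c_1^{(j)}| + |c_2^{(j)}| \bigr)$. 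For the lower bound, note first that $c_1^{(j)} > 0$: its denominator $\theta_1 - \theta_2$ is positive, and its numerator $q^{(j)}_1 - \theta_2 q^{(j)}_0$ is positive because either $\theta_2 < 0$, or $s$ is even and then $q^{(j)}_1 = q_{j+r+s} \geq 2q_{j+r} = 2q^{(j)}_0 > \theta_2 q^{(j)}_0$. Consequently
\[
  q^{(j)}_i\ \geq\ c_1^{(j)}\theta_1^{\,i} - |c_2^{(j)}|\ \geq\ \tfrac12\, c_1^{(j)}\theta_1^{\,i}
\]
whenever $\theta_1^{\,i} \geq 2|c_2^{(j)}|/c_1^{(j)}$, which, as $\theta_1 > 1$, holds for all $i$ past an effectively computable threshold; so $c_4 = \tfrac12 \min_{0 \leq j < s} c_1^{(j)}$ and $N_0$ the maximum of these thresholds over $j$ will do. All of $\theta_1, \theta_2$ and the $c_i^{(j)}$, hence $c_3, c_4, N_0$, are determined by $t_\alpha$ and finitely many initial convergent denominators, so they depend only on $\alpha$ and are effectively computable.

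The only step that is not routine bookkeeping is the non-degeneracy of the characteristic polynomial when $s$ is even (ruling out $t_\alpha = 2$, i.e.\ a repeated unimodular root), and I expect that to be the main obstacle; the growth estimate $q^{(j)}_i \geq F_{j+r+si+1}$ used above settles it cleanly, and the matrix-trace remark gives an alternative route.
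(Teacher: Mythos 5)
Your proposal is correct and follows essentially the same route as the paper: pass to the binary recurrence $q^{(j)}_{i+2}=t_\alpha q^{(j)}_{i+1}-(-1)^s q^{(j)}_i$, solve it in Binet form, and deduce the bounds from the dominant root $\theta_1>1>|\theta_2|$ with $c_1^{(j)}>0$. The only difference is that you justify the distinctness of the roots and the positivity of $c_1^{(j)}$ by explicit growth/trace arguments, points the paper treats more tersely (via $t_\alpha^2-4(-1)^s$ never being a square and the dominating-term remark), so your write-up fills in details rather than changing the method.
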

  \begin{proof}
    From~\eqref{rr_qn} and the definition of $q^{(j)}_{i}$, we have
      \begin{equation}\label{E:rec}
        q^{(j)}_{i+2}-t_{\alpha}q^{(j)}_{i+1}+(-1)^sq^{(j)}_{i}=0,\ i\geq 0.
      \end{equation}
    It is then easy to see that \eqref{I:sub} follows from elementary propert{i}es of binary recursive relat{i}ons. Insofar as our second claim is concerned,
    \[
    t_{\alpha} - 2\ \leq\ \sqrt{t_{\alpha}^2-4(-1)^s}\ \leq\ t_{\alpha}+2,
    \]
    whence
    \[
    \max\{t_{\alpha}-1,1\}< \theta_{1}< t_{\alpha}+1\quad\text{and } -1 < \theta_{2} < 1.
    \]
    The lef{t}most inequality is trivial for $t \geq 2$ while $t_{\alpha}=1$ only when the period $s$ in the cont{i}nued fract{i}on of $\alpha$ equals one (and $\theta_{1} = \nicefrac{(1+\sqrt{5})}{2}$). We alert the reader that both $\theta_{1}$ and $\theta_{2}$ are necessarily irrat{i}onal.
    Furthermore, $c^{(j)}_{1}>0$ for all $j=0,\ldots,s-1$ being coe{f}f{i}cients of respect{i}ve dominat{i}ng terms in~\eqref{E:binet_rep}. One thereby gets for $0 \leq j < s$,
    \[
      q^{(j)}_{i}\leq c_{3}\theta_{1}^i\quad\forall  i \geq 0,\quad\text{where } c_{3} := \max_{0\leq j\leq s-1} \big\{\,c^{(j)}_{1} + \big|c^{(j)}_{2}\big|\,\big\}.
    \]
    We may similarly have a lower bound for the growth of convgerent denominator subsequences. Since $| \theta_{2} / \theta_{1} | < 1$, there exists $N_0 \in \mathbb{N}$ depending only on $\alpha$ such that for all $i\geq N_0$,
    \[
    \left|\frac{\theta_{2}}{\theta_{1}}\right|^i\ <\ \frac{c^{(j)}_{1}}{2\big| c^{(j)}_{2} \big|},\quad j = 0, \ldots, s - 1.
    \]
    Thereaf{t}er, one obtains using triangle inequality that
    \[
      q^{(j)}_{i}\ \geq\ \theta_{1}^i \left| c^{(j)}_{1} - |c^{(j)}_{2}|\left|\frac{\theta_{2}}{\theta_{1}}\right|^i\right|\ >\ \frac{c^{(j)}_{1}}{2}\theta_{1}^i\ \geq\ c_{4}\theta_{1}^i,
    \]
    where $c_{4} := \min_{\,0\,\leq\,j\,\leq\,s-1} c^{(j)}_{ 1} / 2$. Note that both $c_{3}$ and $c_{4}$ depend only on $\alpha$ by our construct{i}on.
  \end{proof}

  \subsection{Heights} Let $\delta$ be an algebraic  number with minimal polynomial
  \[
    f(X) = d_0(X-\delta^{(1)})\cdots(X-\delta^{(d)})\ \in\ \mathbb{Z}[X].
  \]
  The \textit{absolute logarithmic height} $h ( \delta )$ is given by
    \[
      \frac{1}{d} \big( \log d_0+\sum_{i=1}^d\max\{0,\log|\delta^{(i)}|\} \big).
    \]
    For any algebraic numbers $\delta_1, \delta_2$ and rat{i}onal integer $k$, we then have
    \begin{equation}\label{height_of_product}
        h(\delta_1\delta_2^{\pm 1})\ \leq\ h(\delta_1)+h(\delta_2)\quad\text{and}\quad h( \delta_1^k )\ =\ |k| \cdot h (\delta_1)
    \end{equation}
   (see~\cite[Property 3.3]{Wa_00}). The next result provides us an upper bound for absolute logarithmic heights of values taken by integer polynomials at an algebraic number.
\begin{proposition}[\protect{cf.~\cite[Lemma 3.7]{Wa_00}}]\label{height_of_poly}
    Let $f \in \mathbb{Z}[x_1,\ldots,x_T]$ be a non-zero polynomial and $\delta_1,\ldots,\delta_T$ be algebraic numbers. Then,
    \[
    h(f(\delta_1,\ldots,\delta_T))\ \leq\ \sum_{i=1}^T(\deg_{x_i} f)h(\delta_i)+\log L(f)
    \]
    where $L(f)$ denotes the sum of absolute values of coef{f}icients of $f$.
\end{proposition}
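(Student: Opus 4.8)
The plan is to use the standard description of the absolute logarithmic height as a weighted sum over the places of a number field, and to bound the local contributions separately at the archimedean and the non-archimedean places.

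First I would choose a number field $K$ containing $\delta_1,\dots,\delta_T$, so that $\beta:=f(\delta_1,\dots,\delta_T)$ lies in $K$ as well. Let $M_K$ denote the set of places of $K$, normalise each absolute value $|\cdot|_v$ so that its restriction to $\mathbb{Q}$ is the usual $p$-adic or real absolute value, and put $d_v=[K_v:\mathbb{Q}_v]$. Then the height admits the adelic representation
\[
  h(\gamma)\ =\ \frac{1}{[K:\mathbb{Q}]}\sum_{v\,\in\,M_K} d_v\,\log\max\{1,|\gamma|_v\}\qquad(\gamma\in K),
\]
which coincides with the definition recalled before the proposition, and moreover $\sum_{v\,\mid\,\infty} d_v=[K:\mathbb{Q}]$.

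The core of the argument is a pointwise estimate. Writing $f=\sum_{\mathbf e}c_{\mathbf e}\,x_1^{e_1}\cdots x_T^{e_T}$ with $c_{\mathbf e}\in\mathbb{Z}$ and $0\leq e_i\leq\deg_{x_i}f$, I claim that for every $v\in M_K$,
\[
  \max\{1,|\beta|_v\}\ \leq\ \Lambda_v\cdot\prod_{i=1}^{T}\max\{1,|\delta_i|_v\}^{\deg_{x_i}f},
\]
where $\Lambda_v=1$ at non-archimedean $v$ and $\Lambda_v=L(f)$ at archimedean $v$. At a non-archimedean place this follows from the ultrametric inequality, the bound $|c_{\mathbf e}|_v\leq 1$ (valid since $c_{\mathbf e}\in\mathbb{Z}$), and $|\delta_i|_v^{e_i}\leq\max\{1,|\delta_i|_v\}^{\deg_{x_i}f}$; at an archimedean place one applies the triangle inequality together with $|c_{\mathbf e}|_v=|c_{\mathbf e}|$ and $\sum_{\mathbf e}|c_{\mathbf e}|=L(f)$. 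In either case the right-hand side is at least $1$, so one may freely replace $|\beta|_v$ by $\max\{1,|\beta|_v\}$.

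Finally I would take logarithms in this estimate, multiply by $d_v$, and sum over $v\in M_K$. The non-archimedean terms contribute nothing to $\sum_v d_v\log\Lambda_v$, while $\sum_{v\,\mid\,\infty}d_v\log L(f)=[K:\mathbb{Q}]\log L(f)$, so one obtains
\[
  [K:\mathbb{Q}]\,h(\beta)\ \leq\ [K:\mathbb{Q}]\log L(f)\ +\ \sum_{i=1}^{T}(\deg_{x_i}f)\sum_{v\,\in\,M_K}d_v\log\max\{1,|\delta_i|_v\};
\]
dividing by $[K:\mathbb{Q}]$ and recognising the inner sums as $[K:\mathbb{Q}]\,h(\delta_i)$ yields the assertion. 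I expect no genuine obstacle here: the only point demanding attention is fixing the normalisations of $|\cdot|_v$ and of $d_v$ consistently, so that both the displayed height formula and $\sum_{v\,\mid\,\infty}d_v=[K:\mathbb{Q}]$ hold; after that the proof is the routine split into finite and infinite places. (An induction on the number of monomials of $f$, using $h(\gamma_1+\gamma_2)\leq h(\gamma_1)+h(\gamma_2)+\log 2$ and~\eqref{height_of_product}, also works but produces a weaker constant than the sharp $\log L(f)$, so the adelic route is preferable.)
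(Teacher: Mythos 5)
Your argument is correct: the place-by-place estimate (ultrametric inequality plus $|c_{\mathbf e}|_v\leq 1$ at finite places, triangle inequality giving the factor $L(f)$ at infinite places) followed by summation over $v$ is exactly the standard proof of this inequality, and it is essentially the proof in the source the paper cites (Waldschmidt, Lemma~3.7); the paper itself states the proposition by reference and gives no proof. The only point you assert without argument --- that the minimal-polynomial definition of $h$ recalled before the proposition agrees with the adelic formula you use --- is a standard equivalence (via the product formula and Gauss's lemma), so invoking it is legitimate.
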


  \subsection{Baker-type est{i}mates} We will use the following ref{i}nement due to Matveev \cite{Mat1, Mat2}  for lower bounds on linear forms in logarithms:
\begin{proposition}\label{lfl}
    F{i}x $T$ to be some positive integer. Let  $\delta_1,\ldots,\delta_T$ be positive algebraic numbers and $\log\delta_1,\ldots,\log\delta_T$ be some determinations of their complex logarithms. Further, let $D$ be the degree of the number field generated by $\delta_1,\ldots,\delta_T$ over $\mathbb{Q}$.  For $j=1,\ldots,T$, assume that $A_j \in \mathbb{R}$ sat{i}sfy
    \[
      A_j\ \geq\ \max \big\{ Dh(\delta_j),\,|\log\delta_j|,\,0.16\,\big\}.
    \]
    For $k_1, \ldots, k_T \in \mathbb{Z}$, we set $B = \max \big\{\,|k_1|,\,\ldots,\,|k_T|\,\big\}$,
    \[
      \Gamma\ :=\ \delta_1^{k_1}\cdots\delta_T^{k_T}\quad\text{and}\quad\Lambda\ :=\ \sum_{i=1}^Tk_i\log\delta_i.
    \]
    If $\Gamma\neq 1$, then
    \[
      \log|\Gamma-1|\ >\ -1.4\times 30^{T+3} (T+1)^{4.5} D^{2} \log(eD) A_1 \cdots A_T \log(eB)
    \]
    and
    \[
     \log|\Lambda|\ >\ -2\times 30^{T+4} (T+1)^{6} D^{2} \log(eD) A_1 \cdots A_T \log(eB).
    \]

\end{proposition}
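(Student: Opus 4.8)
The statement to be established is Matveev's lower bound, so the plan is to reconstruct the Gelfond--Baker machinery in the sharp, fully explicit form that Matveev obtained. I would argue by contradiction: assume $\Gamma \neq 1$ yet $|\Gamma - 1|$ (equivalently $|\Lambda|$) violates the claimed inequality, and derive an impossibility. It is convenient to treat the two assertions together, since $\Gamma - 1 = e^{\Lambda} - 1$ and the elementary estimate $\tfrac12 |\Lambda| \leq |e^{\Lambda} - 1| \leq 2|\Lambda|$ (valid once $|\Lambda| \leq 1$, which one may assume after a trivial case distinction) transfers any bound for one form into a bound for the other, up to the harmless constant discrepancy reflected in the two different numerical factors $30^{T+3}(T+1)^{4.5}$ and $30^{T+4}(T+1)^{6}$. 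I would then fix large auxiliary integer parameters $L_0, L_1, \ldots, L_T$ (polynomial degrees) and a node count $S$, all to be optimized at the very end in terms of the quantities $A_1 \cdots A_T$, $D$, $\log(eD)$, $\log(eB)$ and $T$.

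The first substantive step is to construct the auxiliary object. Consider the entire functions $z \mapsto z^{\lambda_0}\,\delta_1^{\lambda_1 z}\cdots \delta_T^{\lambda_T z}$ for multi-indices with $0 \leq \lambda_i \leq L_i$, where the powers use the fixed determinations $\log\delta_i$. Following Laurent's interpolation--determinant refinement (which delivers explicit constants most transparently), I would form the square matrix $\mathcal{M}$ whose rows are indexed by the $(\lambda_0,\ldots,\lambda_T)$ and whose columns are indexed by pairs $(s,\tau)$, the entry being the $\tau$-th derivative at the node $s$ of the corresponding function. The arithmetic input is that, after clearing a controlled denominator, the determinant $\Delta$ of $\mathcal{M}$ is an algebraic number whose norm $N_{K/\mathbb{Q}}(\Delta)$ is a nonzero rational integer; this gives the Liouville-type bound $\log|\Delta| \geq -(D-1)\max_{\sigma}\log|\sigma\Delta|$, so that $\Delta$ cannot be too small unless its conjugates are large, the latter being controllable through Proposition~\ref{height_of_poly} and the height inequalities~\eqref{height_of_product}.

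The second step is the analytic upper bound, which is where the smallness of $\Lambda$ is exploited. When $|\Lambda|$ is tiny the columns of $\mathcal{M}$ are almost linearly dependent, because the exponential factors $\delta_i^{\lambda_i s}$ differ from those at a reference node only by factors $e^{O(\Lambda)}$; applying a Schwarz-type maximum-modulus argument to $\Delta$, regarded as a holomorphic function of an auxiliary variable into which the nodes are deformed, produces an upper bound for $\log|\Delta|$ that decreases as $|\Lambda|$ decreases, of the shape $-\,c\,|\Lambda|\cdot(\text{volume of parameters}) + (\text{size terms})$. To guarantee that $\Delta \neq 0$ in the first place (so that the norm bound $|N(\Delta)| \geq 1$ applies) I would invoke a zero/multiplicity estimate of Philippon or Wüstholz type on the relevant commutative group variety: the chosen monomials cannot vanish to the prescribed orders at the prescribed nodes unless an obstruction by a proper algebraic subgroup occurs, and such an obstruction is ruled out precisely by the $\mathbb{Q}$-linear relations forced on the $\log\delta_i$ by $\Gamma = 1$, contrary to hypothesis.

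Combining the arithmetic lower bound with the analytic upper bound yields a single inequality in the parameters, and the contradiction is extracted by choosing $L_0,\ldots,L_T$ and $S$ optimally. The step I expect to be the main obstacle is exactly this optimization, and it is the crux of Matveev's contribution: arranging matters so that the number of logarithms $T$ enters only through a \emph{single} exponential $30^{T+3}$ with a small explicit base together with the polynomial factors $(T+1)^{4.5}$, while the field degree contributes only the near-optimal $D^2\log(eD)$. This is achieved by Matveev's inductive descent on the number of logarithms, in which one eliminates one $\log\delta_i$ at a time and tracks the constants through each reduction, combined with a balanced choice of the height bounds $A_j$ satisfying $A_j \geq \max\{Dh(\delta_j), |\log\delta_j|, 0.16\}$. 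Carrying the induction through with uniform control of the implied constants is the delicate part; once it is complete, the elementary passage between $\Lambda$ and $\Gamma - 1$ noted at the outset furnishes both displayed inequalities with their stated numerical constants.
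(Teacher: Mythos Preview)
The paper does not prove this proposition at all: it is quoted verbatim as a known result of Matveev, with citations to \cite{Mat1, Mat2}, and is used thereafter as a black box. There is therefore no ``paper's own proof'' to compare your attempt against.

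Your sketch is a reasonable high-level roadmap of the Gelfond--Baker/Matveev machinery (auxiliary interpolation determinant, Liouville-type arithmetic lower bound via norms, Schwarz-type analytic upper bound, a zero/multiplicity estimate to ensure nonvanishing, and parameter optimisation), and the remark about passing between $|\Gamma-1|$ and $|\Lambda|$ via $|e^{\Lambda}-1|\asymp|\Lambda|$ for small $|\Lambda|$ is correct. But it is only an outline, not a proof: the genuinely hard content---Matveev's inductive scheme that yields the single-exponential dependence $30^{T+3}$ in $T$ and the $D^2\log(eD)$ dependence in the degree, together with the precise numerical constants---is asserted rather than carried out, and filling it in amounts to reproducing a substantial portion of \cite{Mat1, Mat2}. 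For the purposes of this paper no such reconstruction is expected; one simply cites the result.
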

  \subsection{Some elementary inequali{t}ies}\label{SubS:EI}
    Let \eqref{E:ya} and hence,~\eqref{E:yaf9} hold. For $1\leq i\leq k$, we write $N_i'=sn_i+j_i+r$ where each $j_i$ is constrained as $0 \leq j_i\leq s-1$. In the notat{i}on of Lemma~\ref{Binet-type}, one gets
   \begin{equation}\label{E:ya_modified}
     y^a=  \sum_{i=1}^k d_iq^{(j_i)}_{n_i},\quad n_1 \geq \cdots \geq n_{k} \geq 0.
    \end{equation}
    The upper bound in~\eqref{qn_ub_lb} helps us to obtain
    \[
      2^a\ \leq\ y^a\ \leq\ Kq^{(j_1)}_{n_1}\ \leq\ Kc_{3}\theta_{1}^{n_1}.
    \]
    Thereby,
    \begin{equation}\label{E:a_n1}
      a\log y\ \leq\ c_{5}n_1\quad\textrm{and}\quad a\ \leq\ c_{6}n_1
    \end{equation}
    where
    \[
      c_5 := \log^+(Kc_{3})+\log\theta_{1}\ \textrm{ and }\ c_{6} := c_{5}/\log 2.
    \]
    Let us recall $N_0$ as in Lemma~\ref{Binet-type}. If $n_1<N_0$, then $y^a \ll_{\alpha} K$ so that Theorems~\ref{Th:Ham} and \ref{Th:y} follow trivially. We may, therefore, assume $n_1 \geq N_0$ so that the lower bound in~\eqref{qn_ub_lb} applies and
    \[
      y^a\ \geq\ q^{(j_1)}_{n_1}\ \geq\ c_{4}\theta_{1}^{n_1}.
    \]
    Now, let~\eqref{E:ya} and~\eqref{E:y} hold together. Denote $\varphi$ to be the golden rat{i}o $(1+\sqrt{5}) / 2$. It is well-known and can in part{i}cular be proved using induction that
    \[
      F_t = \frac{\varphi^t-(-\varphi)^{-t}}{\sqrt{5}} 
    \]
    which implies $\varphi^{t-2} \leq F_t \leq \varphi^{t-1}$ for all $t \geq 1$. From \eqref{E:y} and the greedy property of Zeckendorf representat{i}ons~\eqref{greediness}, one has $y < F_{m_1+1} \leq \varphi^{m_1}$ giving us
    \begin{equation}\label{E:logy_ub_m1}
      \log y\ <\ m_1 \log\varphi\ <\ m_1.
    \end{equation}
    We also have
    \[
      \varphi^{m_1-2}\ \leq\ F_{m_1}\ \leq\ y\ <\ y^a\ \leq\ Kc_{3}\theta_{1}^{n_1}
    \]
    so that
    \begin{equation}\label{E:m1_ub_n1}
      m_1\ \leq\ c_{7} n_1,\quad\textrm{where }\ c_{7} := \frac{\log^+(K\varphi^2 c_{3})+\log\theta_{1}}{\log\varphi}.
    \end{equation}
    Further,
    \[
      c_{4}\theta_{1}^{n_1}\ \leq\ y^a\ <\ F_{m_1+1}^a\ \leq\ \varphi^{am_1}
    \]
    implying
    \begin{equation}\label{E:n1_ub_m1}
      n_1\ \leq\ c_{8} \cdot am_1\quad\textrm{where }\ c_{8} := \max\,\big\{\,\frac{\log\varphi+\log^+(1/c_{4})}{\log\theta_{1}},\,1\,\big\}.
    \end{equation}
    Lastly, we may have occasion to employ the following result due to Peth\"{o} and de Weger~\cite{Petho_Weger_86} while `transferring secondary factors.'
    \begin{lemma}\label{L:PW}
      Let $a \geq 0,\ c \geq 1,\ g > \big( e^2 / c \big)^{c}$ and $x \in \mathbb{R}$ be the largest solution of $x= a+g(\log x)^{c}.$ Then,
      \[
        x\ <\ 2^{c}\,\big(\,a^{1/c} + g^{1/c} \log (c^{c}g)\,\big)^{c}.
      \]
    \end{lemma}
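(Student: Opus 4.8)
The plan is to reduce this two-parameter estimate to a one-dimensional monotonicity argument. First I would dispose of the trivial range: if $x \le 1$, then the asserted bound already exceeds $1$, because $g > (e^2/c)^c$ gives $c^c g > e^{2c}$ and hence $\log(c^c g) > 2$, so that $2^c(a^{1/c}+g^{1/c}\log(c^c g))^c \ge (2g^{1/c}\log(c^c g))^c > (4e^2)^c > 1$. Thus I may assume $x > 1$, so $\log x > 0$, and from the defining relation I only need the inequality $x \le a + g(\log x)^c$.

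Next I would take $c$-th roots and linearize. Writing $P := x^{1/c}$ and using the subadditivity of $t \mapsto t^{1/c}$ for $c \ge 1$ (that is, $(u+v)^{1/c}\le u^{1/c}+v^{1/c}$ for $u,v\ge 0$), the relation becomes $x^{1/c} \le a^{1/c}+g^{1/c}\log x$. Since $\log x = c\log P$, this reads $P \le A + B\log P$ with $A := a^{1/c}\ge 0$ and $B := c\,g^{1/c}$. Two bookkeeping observations now align everything: the hypothesis $g>(e^2/c)^c$ is exactly $B>e^2$, and the identity $c^c g = B^c$ turns $g^{1/c}\log(c^c g)$ into $B\log B$, so the claimed bound $x < 2^c(a^{1/c}+g^{1/c}\log(c^c g))^c$ is equivalent, after raising to the power $c$, to the single statement $P < 2(A+B\log B)$.

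The core of the proof is therefore the elementary claim: if $A\ge 0$, $B>e^2$ and $P\le A+B\log P$, then $P<P^*:=2(A+B\log B)$. I would study $\phi(t):=t-B\log t$, which is decreasing on $(0,B)$ and increasing on $(B,\infty)$. If $P\le B$, the conclusion is immediate since $B<2B\log B\le P^*$, using $\log B>\tfrac12$. Otherwise $P>B$, the hypothesis reads $\phi(P)\le A$, and one checks $P^*\ge 2B\log B>B$; hence both $P$ and $P^*$ lie in the region where $\phi$ is strictly increasing, so it suffices to show $\phi(P^*)>A$, which then forces $P<P^*$.

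I expect the main obstacle to be precisely this last verification, since it is where the hypothesis $B>e^2$ and the constant $2$ must interact exactly. Substituting $P^*=2(A+B\log B)$ and writing $r:=(A+B\log B)/B = A/B+\log B$, the inequality $\phi(P^*)>A$ unwinds to $r-\log r>\log 2$. Because $B>e^2$ gives $\log B>2$, we have $r\ge \log B>2$, and since $t\mapsto t-\log t$ is increasing for $t>1$, it follows that $r-\log r>2-\log 2>\log 2$, as required. Raising $P<P^*$ to the $c$-th power and translating back via $c^c g=B^c$ yields the stated bound; every step apart from the inequality $r-\log r>\log 2$ is routine, and any slackening of the constants would break exactly that chain.
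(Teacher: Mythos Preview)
Your proof is correct and self-contained, whereas the paper does not prove this lemma at all: it is simply quoted from Peth\H{o} and de~Weger~\cite{Petho_Weger_86} and used as a black box. So there is no ``paper's own proof'' to compare against, and what you have written is strictly more than what the paper provides.

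One small imprecision worth tightening: in the trivial range $x\le 1$ you deduce $\log(c^c g)>2$ from $c^c g>e^{2c}$, but to reach $(2g^{1/c}\log(c^c g))^c>(4e^2)^c$ you actually need the sharper (and equally immediate) bound $\log(c^c g)>2c$, combined with $g^{1/c}>e^2/c$, to get $g^{1/c}\log(c^c g)>2e^2$. With that adjustment the argument goes through verbatim. The substantive part of your proof---the substitution $B=cg^{1/c}$ turning the hypothesis into $B>e^2$ and the target into $P<2(A+B\log B)$, followed by the monotonicity analysis of $\phi(t)=t-B\log t$ and the verification $r-\log r>2-\log 2>\log 2$---is clean and exactly the kind of elementary calculus argument that underlies the original Peth\H{o}--de~Weger lemma.
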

    
  \subsection{The case $k = 1$}\label{SubS:k1}
  If so, Equat{i}on~\eqref{E:yaf9} transforms as
  \begin{equation}\label{E:keq1}
    Kq_{N_1} = y^a.
  \end{equation}
  Consider prime decomposi{t}ions
  \[
    K = p_1^{e_1} \cdots p_t^{e_t} \text{ and } y = p_1^{f_1} \dots p_t^{f_t}y_1 \text{ where } \gcd\,(K,y_1) = 1.
  \]
  For the triple $( y, a, N_1 )$ to be a solut{i}on of~\eqref{E:keq1}, each $f_j \geq 1$ and $af_j \geq e_j$ for $1 \leq j \leq t$. At this stage, one has
  \[
    q_{N_1} \in \mathcal{S} y_1^a
  \]
  where $\mathcal{S}$ is the set of all non-zero integers with same prime factors as $K$.
  Our claim is that Theorems~\ref{Th:Ham} and \ref{Th:y} now follow from~\cite[\S\,1]{Pe_82}:
  \begin{proposition}[Peth\"{o}]\label{P:Pet}
    Let $A, B, G_0$ and $G_1$ be integers such that $A \big(\,|G_0| + |G_1|\,\big) \neq 0,\ \gcd ( A, B ) = 1$ and $A^2 / B \notin \{ 1, 2, 3, 4 \}$. Furthermore, assume that $A^2 - 4B$ is not a perfect square whenever $B ( G_1^2 - AG_0G_1 + BG_0^2 ) = 0$. Def{i}ne linear recursive sequence $( G_n )_{n\,\geq\,0}$ as
    \[
      G_{n + 2} = AG_{n + 1} - B G_{n}\quad\text{for all } n \geq 0
    \]
    and $\mathcal{S}$ be the set of all non-zero integers with prime factors belonging to some f{i}xed f{i}nite set. Then, the Diophant{i}ne equat{i}on $G_n = wx^a$ with $a \geq 2$  and $w \in \mathcal{S}$ has f{i}nitely many integer solut{i}ons bounded as per
    \begin{align*}
      \max \big\{ |w|, |x|, n, a \big\}\ \ll_{A, B, G_0, G_1, \mathcal{S}}\ 1 &\quad\text{if } x > 1, \text{ and}\\
      \max \big\{ |w|, n \big\}\ \ll_{A, B, G_0, G_1, \mathcal{S}}\ 1 &\quad\text{if } x = 1.
    \end{align*}
    Here, the implied constants are ef{f}ect{i}vely computable.
  \end{proposition}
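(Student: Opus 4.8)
The plan is to run Baker's method on the Binet representation of $(G_n)$. Write $\theta_1=\tfrac{A+\sqrt{A^2-4B}}{2}$ and $\theta_2=\tfrac{A-\sqrt{A^2-4B}}{2}$ for the roots of $T^2-AT+B$. The hypothesis $A^2/B\notin\{1,2,3,4\}$ forces $\theta_1\neq\theta_2$ and, together with $A\neq0$, makes $\theta_1/\theta_2$ a non-root of unity, so that $G_n=\gamma_1\theta_1^n+\gamma_2\theta_2^n$ for suitable $\gamma_1,\gamma_2\in\mathbb{Q}(\theta_1)$; the degenerate possibilities $B=0$ or $\gamma_1\gamma_2=0$ are killed precisely by the requirement that $A^2-4B$ not be a perfect square whenever $B(G_1^2-AG_0G_1+BG_0^2)=0$, since in each of them $\theta_1$ would be rational. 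Relabelling, one may assume $|\theta_1|\ge|\theta_2|$, and I would treat first the principal case $|\theta_1|>|\theta_2|$ (the case $\theta_1,\theta_2\in\mathbb{Q}$ being identical, with absolute values in place of conjugation; the case $|\theta_1|=|\theta_2|$ is postponed). For $n$ beyond an effective threshold one has $|\gamma_1|\,|\theta_1|^n\ll|G_n|\ll|\theta_1|^n$, whence $G_n=wx^a$ yields $a\ll n$, $\log|w|\ll n$, and, when $x>1$, $\log x\ll n/a$; since $|w|,x\le|G_n|$ and $a\le\log_2|G_n|$, it is enough to bound $n$ effectively.

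If $x=1$, then $G_n=w$ is an $\mathcal{S}$-unit of size $\asymp|\theta_1|^n$, and from $|\gamma_1\theta_1^n-G_n|=|\gamma_2|\,|\theta_2|^n$ the linear form $\Lambda:=n\log|\theta_1|+\log|\gamma_1|-\sum_{p\in S}v_p(w)\log p$ satisfies $0<|\Lambda|\ll(|\theta_2|/|\theta_1|)^n$; it is nonzero because $\gamma_1\theta_1^n$ is never rational (otherwise it would equal its conjugate $\gamma_2\theta_2^n$, forcing $|\theta_1|=|\theta_2|$). As each $v_p(w)\le\log|G_n|\ll n$, all exponents in $\Lambda$ are $\ll n$, so Proposition~\ref{lfl} gives $|\Lambda|>\exp(-\kappa_1\log n)$; comparing the two estimates bounds $n$.

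Now let $x>1$. If every prime factor of $x$ lies in $S$, then $G_n\in\mathcal{S}$ and the previous step applies, so fix a prime $p_0\notin S$ dividing $x$. I would first bound $|w|$: for every $p\in S$ not dividing $B$ one has $v_p(\theta_1)=v_p(\theta_2)=0$, and a $p$-adic lower bound for linear forms in logarithms applied to the nonzero number $-\tfrac{\gamma_1}{\gamma_2}(\theta_1/\theta_2)^n-1=G_n/(\gamma_2\theta_2^n)$ — whose $p$-adic valuation is $v_p(G_n)$ up to a bounded error — gives $v_p(G_n)\ll\log n$, hence $|w|\le\prod_{p\in S}p^{v_p(G_n)}\ll n^{\kappa_2}$. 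Next, the archimedean form $\Lambda':=n\log|\theta_1|+\log|\gamma_1|-a\log x-\sum_{p\in S}v_p(w)\log p$ again satisfies $0<|\Lambda'|\ll(|\theta_2|/|\theta_1|)^n$, and treating $x$ as one of the algebraic numbers in Proposition~\ref{lfl} (its logarithmic height is $\le\log x\ll n/a$, the remaining heights and all exponents being $\ll n$) forces $n\ll\tfrac{n}{a}\log n$, i.e.\ $a\ll\log n$. Finally, since $p_0\notin S$ we have $v_{p_0}(G_n)=a\,v_{p_0}(x)\ge a$, and a $p_0$-adic estimate for the same two-term form, combined with $a\ll\log n$, with $|w|\ll n^{\kappa_2}$, and with the constraint $p_0\le x\le|G_n|^{1/a}$ on the size of $p_0$, is what I expect to contradict largeness of $n$; for example, if $x$ is $\kappa_3$-smooth then that step already gives $x\ll n^{\kappa_4}$, hence $|G_n|=|w|x^a\ll\exp(\kappa_5(\log n)^2)$ and $n$ is bounded, and the remaining case, where $x$ has a large prime factor, is the delicate one.

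If instead $|\theta_1|=|\theta_2|$, write $G_n=2\,\mathrm{Re}(\gamma_1\theta_1^n)=2|\gamma_1|\,|\theta_1|^n\cos(n\psi+\psi_0)$ with $\psi=\arg\theta_1$, $\psi_0=\arg\gamma_1$; since $\theta_1/\overline{\theta_1}$ is not a root of unity, an application of Baker's theorem to $2\cos(n\psi+\psi_0)=G_n/(|\gamma_1|\,B^{n/2})$ yields $|\cos(n\psi+\psi_0)|>\exp(-\kappa_6\log n)$, so $|G_n|\gg|\theta_1|^n\exp(-\kappa_6\log n)$ and one proceeds as in the principal case (or quotes the known effective resolution of pure powers in such recurrences). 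The crux throughout is the last step of the third paragraph: the base $x$ and the exponent $a$ are simultaneously unknown, and an archimedean linear-forms estimate on its own only caps $a$ by a slowly growing function of $n$; the $p$-adic input is what breaks this circularity, and is the main obstacle.
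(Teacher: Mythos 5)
You are not being compared against an internal argument here: the paper does not prove Proposition~\ref{P:Pet} at all, it simply quotes Peth\"{o}'s 1982 result \cite{Pe_82} and uses it as a black box in \S\,\ref{SubS:k1}. So the only question is whether your sketch amounts to a proof of Peth\"{o}'s theorem, and it does not; you concede the decisive point yourself. In the case $x>1$, $a\geq 2$, your archimedean linear form $\Lambda'$ can only ever give $a\ll(\log n)^{O(1)}$, because the height of $x$ is of size $n/a$; and the $p$-adic estimates you invoke control only the primes of the fixed set $S$ (and, as you set them up, not even those dividing $B$, where $v_p(\theta_i)=0$ fails). At a prime $p_0\mid x$ with $p_0\notin S$, lower bounds for $p$-adic linear forms in logarithms carry constants that grow with $p_0$, and $p_0$ may be as large as $|G_n|^{1/a}$, so the inequality $v_{p_0}(G_n)\geq a$ produces no contradiction. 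Consequently nothing in your outline bounds $n$ (equivalently $x$) when $a$ is small, say $a=2$ or $3$ --- which is exactly the substance of the proposition; the subcases you do close ($x=1$, or $x$ smooth) are the easy ones.

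The missing ingredient is structural, not a sharper use of Proposition~\ref{lfl}. The known effective proofs (Peth\"{o}; Shorey--Stewart) exploit the companion-sequence identity
\[
  \bigl(2G_{n+1}-AG_n\bigr)^2-(A^2-4B)\,G_n^2\ =\ 4\bigl(G_1^2-AG_0G_1+BG_0^2\bigr)B^{\,n},
\]
which, upon substituting $G_n=wx^a$ and splitting $n$ into a residue modulo $a$ (so that $B^{\,n}$ becomes a fixed factor times an $a$-th power), converts the problem into super-/hyperelliptic equations of the shape $Y^2=f(X)$, resp.\ $Y^a=f(X)$ with $a$ variable; these are then treated by the effective theorems of Baker and of Schinzel--Tijdeman, and it is there (together with $p$-adic estimates for the $S$-part $w$) that linear forms in logarithms actually enter. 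No step of this kind appears in your plan, so the proof has a genuine gap at its central case. Two smaller points: the paper's Proposition~\ref{lfl} is purely archimedean, so your $p$-adic input is an external tool that should be cited explicitly (e.g.\ Yu's estimates), and your parenthetical explanation of how the hypothesis ``$A^2-4B$ is not a perfect square whenever $B(G_1^2-AG_0G_1+BG_0^2)=0$'' excludes the degenerate initial values is garbled (the hypothesis forces $\theta_1$ irrational in those configurations, and then $\gamma_1\gamma_2=0$ contradicts $|G_0|+|G_1|\neq 0$), though this is easily repaired.
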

  \noindent The requisite condi{t}ions in Proposi{t}ion~\ref{P:Pet} are always sat{i}sf{i}ed for us as $t_{\alpha}^2-4(-1)^s$ is never a perfect square in~\eqref{E:rec}. If $t_{\alpha} \in \{ 1, 2 \}$, the period $s$ is forced to be equal to $1$ so that $t_{\alpha}^2 - 4 ( -1 )$ is not a square. Else, the trace of the product integer matrix in~\eqref{t_alpha} is at least $3$ and any other perfect square has distance at least $5$ from $t_{\alpha}^2$. Moreover,
  \[
    t_{\alpha}^2\neq j(-1)^s \text{ for } 1 \leq j \leq 4.
  \]
  The bot{t}omline is that $\max \{ y,a,N_1 \} \ll_{\alpha} 1$. Thus, we may hereaf{t}er assume $k\geq 2$.

  \section{Bound in terms of $y$}\label{sec_thm2}
    \noindent Let $w\in\{1,\ldots,k\}$, where $k \geq 2$ and set $n_{k + 1} = 0$. Equat{i}ons~\eqref{E:binet_rep} and~\eqref{E:ya_modified} can be combined to write
    \[
      y^a - \sum_{i\,=\,1}^w d_ic_{1}^{(j_i)} \theta_{1}^{n_i}\ =\ -\sum_{i\,=\,1}^w d_ic_{2}^{(j_i)} \theta_{2}^{n_i} + \sum_{i\,=\,w\,+\,1}^{k} d_iq^{(j_i)}_{n_i}.
    \]
    This along with the growth bounds given in~\eqref{qn_ub_lb}, the fact that $|\theta_{2}| < 1 < \theta_1$ and $c_{1}^{(j)}$'s being posi{t}ive for all $0 \leq j < s$ implies that
    \begin{align}
      \left| \dfrac{y^a}{\theta_{1}^{n_1} \sum_{i\,=\,1}^w d_ic_{1}^{(j_i)}\theta_{1}^{n_i-n_1}} - 1 \right|\ &\leq\ \frac{\sum_{i\,=\,1}^w d_i \big| c_{2}^{(j_i)}\theta_{2}^{n_i} \big|}{c_1^{(j_1)}\theta_{1}^{n_1}}\, +\, \frac{c_{3}}{c_1^{(j_1)}}\sum_{i\,=\,w\,+\,1}^{k}\frac{d_i}{\theta_1^{n_1-n_i}}\notag\\
      &\ll_{\alpha,k} \frac{1}{\theta_1^{n_1-n_{w+1}}}.\label{E:Thm2_lfl_ub}
    \end{align}
    We def{i}ne
    \begin{equation}\label{E:defn_gamma_Aw}
      \Gamma_{A,\,w}\ :=\ y^a \theta_{1}^{-n_1} \left(\,\sum_{i\,=\,1}^w d_ic_{1}^{(j_i)}\theta_{1}^{n_i-n_1}\,\right)^{-1}.
    \end{equation}
    \noindent Suppose $\Gamma_{A,\,w}=1$. It is possible only if
    \[
      y^a\ =\ \sum_{i\,=\,1}^w d_ic_{1}^{(j_i)}\theta_{1}^{n_i}.
    \]
    Consider the conjugate embedding $\sigma : \mathbb{Q}(\theta_{1}) \to \mathbb{C}$ mapping $\theta_{1}$ to $\theta_{2}$. Af{t}er verifying that $\sigma \big( c_1^{(j)} \big) = -c_2^{(j)}$ for all $0 \leq j < s$, one gets
    \[
      \sum_{i\,=\,1}^w d_ic_{1}^{(j_i)}\theta_{1}^{n_i}\ =\ y^a\ =\ \sigma ( y^a )\ =\ \sigma \big(\,\sum_{i\,=\,1}^w d_ic_{1}^{(j_i)}\theta_{1}^{n_i}\,\big)\ =\ -\sum_{i\,=\,1}^w d_ic_{2}^{(j_i)}\theta_{2}^{n_i}.
    \]
    As $c_1^{(j)} > 0$ for all $j$ and $d_i \geq 1$ for all $1 \leq i \leq k$, we obtain
    \begin{equation}\label{E:n1ll1}
      c_{1}^{(j_1)}\theta_{1}^{n_1}\ <\ \left|\,\sum_{i\,=\,1}^w d_ic_{2}^{(j_i)}\theta_{2}^{n_i}\,\right|\ \ll_{\alpha,\,K} 1
    \end{equation}
    whereby $n_1\ll_{\alpha,\,K} 1$. We may then invoke~\eqref{E:a_n1} to argue that the size of any solut{i}on to~\eqref{E:ya} is bounded above by some ef{f}ect{i}vely computable constant depending only on $\alpha$ and $K$. Consequently, Theorem~\ref{Th:y} holds in this case.\\[-0.1cm]
    
    \noindent Now, suppose $\Gamma_{A,w} \neq 1$. If the upper bound achieved in~\eqref{E:Thm2_lfl_ub} happens to be greater than $1/2$, then $n_1-n_{w+1} \ll_{\alpha,\,K} 1$. Else, we apply Proposition~\ref{lfl} with 
    $T=3$, $D=2$,
    \begin{align*}
      \delta_1\ =\ y,\quad&\delta_2\ =\ \theta_{1},\quad\delta_3\ =\ \sum_{i\,=\,1}^w d_ic_{1}^{(j_i)}\theta_{1}^{n_i-n_1},\\
      k_1\ =\ a,\quad&k_2\ =\ -n_1\quad\text{and } k_3\ =\ -1.
    \end{align*}
    \noindent Since absolute logarithmic height $h(y) = \log y$ with $y \geq 2$ and $h ( \theta_{1} ) = ( \log\theta_{1} ) / 2$, we can take $A_1=2\log y$ and
    \[
      A_2\ :=\ \max\,\{\,\log\theta_{1},\,0.16\,\}\ =\ \log \theta_1
    \]
    recalling the discussion below Proposi{t}ion~\ref{P:Pet}. For the last algebraic number $\delta_3$, our recourse is Proposition~\ref{height_of_poly} read along with~\eqref{height_of_product}. Let
    \[
      f ( X_1, \ldots, X_w, Y ) = d_1 X_1 + d_2 X_2Y^{n_1 - n_2} + \cdots + d_w X_w Y^{n_1 - n_w}.
    \]
    One can easily see that $\delta_3 = f\,\big(\,c_1^{(j_1)},\,\ldots,\,c_1^{(j_w)},\,\theta_1^{-1}\,\big)$ and
    \begin{equation}
      h(\delta_3)\ \leq\ \sum_{i\,=\,1}^w h \big( c_{1}^{(j_i)} \big) + ( n_1 - n_{w} )\,h ( \theta_{1} ) +\log \big( \sum_{i\,=\,1}^{w} d_i\,\big).\label{E:height_of_delta3}
    \end{equation}
    Denote the bound obtained on the right side of~\eqref{E:height_of_delta3} by $\widetilde{A}_3$ and
    \[
      A_3 := \max\,\big\{\,2\widetilde{A}_3,\,|\log\delta_3|,\,0.16\,\big\}.
    \]
    Note that $d_1c_1^{(j_1)} < \delta_3 \ll_{\alpha} k$ which implies $| \log\delta_3 | \ll_{\alpha, K} 1$. In the light of~\eqref{E:a_n1}, we also def{i}ne $B := \max\,\{\,c_{6},\,1\,\}\cdot n_1 \geq \max\,\{\,a,\,n_1,\,1\,\}$. Thereaf{t}er, Matveev tells us
    \[
      \log|\Gamma_{A,w}-1|\ \geq \ -c_9\cdot\max\{n_1-n_w,1\}\log y \log(eB).
    \]
    On combining this with~\eqref{E:Thm2_lfl_ub}, we get
    \begin{equation}\label{E:UB_n1nw}
      n_1 - n_{\,w\,+\,1}\ \leq\ c_{10}\cdot\max\,\{\,n_1 - n_w,\,1\,\} \log y \log n_1
    \end{equation}
    where the constant $c_{10}$ is taken to be greater than $e^2 / \log 2$. This caut{i}on is being exercised awai{t}ing an imminent application of Lemma~\ref{L:PW}. It is worth point{i}ng out that the above inequality holds even when the upper bound in~\eqref{E:Thm2_lfl_ub} is more than $1/2$.\\[-0.1cm]
    
    \noindent We have already seen that Theorem~\ref{Th:y} follows from~\eqref{E:n1ll1} as soon as $\Gamma_{A,w} = 1$ for some $w\in\{1,\ldots,k\}$. Hence, assume $\Gamma_{A,w}\neq1$ for all $1 \leq w \leq k$. Applying~\eqref{E:UB_n1nw} repeatedly for $w = k, \ldots, 1$, one gets
    \begin{equation}\label{E:nlogn}
      n_1\ =\ n_1 - n_{k+1}\ \leq\ c_{10}^k ( \log y \log n_1 )^k.
    \end{equation}
    With $a = 0,\ c = k$ and $g = ( c_{10} \log y )^k$, the secondary factors on the right side of~\eqref{E:nlogn} can be transferred using Lemma~\ref{L:PW} to have
    \begin{equation}\label{E:thm2_conclusion}
      n_1\ \ll_{\alpha,k} ( \log^+\!y \cdot \log^+ \log^+\!y )^{k}.
    \end{equation}
Since $k\leq K$, Theorem~\ref{Th:y} follows. \hfill \qed

\section{Bound in terms of Zeckendorf Hamming weight of $y$}\label{sec_thm1}
  \noindent In view of \S \ref{SubS:k1}, we assume $k\geq 2$. It follows from \eqref{E:logy_ub_m1} and \eqref{E:thm2_conclusion} that 
  \begin{equation}\label{E:surp}
    n_1\ \ll_{\alpha, k}\ m_1^{k} ( \log^+\!m_1 )^k.
  \end{equation}
  A total of $k$ \textit{linear forms in logarithms}, namely $\Gamma_{A,w}$ for $1 \leq w \leq k$, were earlier constructed in~\S\,\ref{sec_thm2} using~\eqref{E:ya}. Similarly, $\ell$ linear forms in logarithms can be conjured up from~\eqref{E:y}. This has been a central idea in~\cite[\S\,3.1]{VZ_24}. As such, we only write the inequality of relevance to us. For $v \in \{ 1,\,\ldots,\,\ell\,\}$, let
  \[
    \Gamma_{B,\,v} := y \sqrt{5} \varphi^{-m_1} \left(\sum_{i=1}^v\varphi^{m_i-m_1}\right)^{-1}
  \]
  akin to~\eqref{E:defn_gamma_Aw}. Equat{i}on~\eqref{E:y} in Theorem~\ref{Th:Ham} may be used to have
    \begin{equation}\label{E:ub_gamma_bv}
        |\Gamma_{B,v}-1|\ \leq\ \frac{6}{\varphi^{m_1-m_{v+1}}},
    \end{equation}
    where $m_{\ell+1}$ is set to be $0$. For $w \in \{ 1,\,\ldots,\,k \}$ and $v \in \{ 1,\,\ldots,\,\ell\,\}$, let
    \begin{align}
      \Lambda_{A,\,w} &:= \log\Gamma_{A,\,w}\ =\ a\log y-n_1\log\theta_{1}-\log(\sum_{i=1}^w d_ic_{1}^{(j_i)}\theta_{1}^{n_i-n_1}),\label{E:AwBv}\\
      \Lambda_{B,\,v} &:= \log\Gamma_{B,\,v}\ =\ \log y+\log\sqrt{5}-m_1\log\varphi-\log(\sum_{i=1}^v\varphi^{m_i-m_1}), \text{ and}\notag\\
      \Lambda^*_{v,\,w} &:= a\Lambda_{B,\,v}-\Lambda_{A,\,w}\ =\ a\log\sqrt{5}-am_1\log\varphi-a\log(\sum_{i=1}^v\varphi^{m_i-m_1})\notag\\
      &\qquad\qquad\qquad\qquad\qquad + n_1\log\theta_{1}+\log(\sum_{i=1}^w d_ic_{1}^{(j_i)}\theta_{1}^{n_i-n_1}).\notag
    \end{align}
    Recall we mentioned in~\S\,\ref{sec_thm2} that if the right side of~\eqref{E:Thm2_lfl_ub} is more than $1/2$, then $n_1-n_{w\,+\,1}\ll_{\alpha,\,k} 1$. If not, the fact that
    \begin{equation}\label{E:G2L}
      | \log x\,|\ \leq\ 2\,|\,x - 1\,|\quad\text{whenever}\quad|\,x - 1\,|\ \leq\ 1/2
    \end{equation}
    can be employed to get
    \begin{equation}\label{E:Aw_ub}
      |\,\Lambda_{A,\,w}\,|\ \ll_{\alpha,\,k}\ \frac{1}{\theta_1^{n_1 - n_{w+1}}}.
    \end{equation}
    If $m_1-m_{v+1}\geq 6$, we similarly use \eqref{E:ub_gamma_bv} to obtain
    \[
      |\,\Lambda_{B,\,v}\,|\ \leq\ \frac{12}{\varphi^{m_1-m_{v+1}}}.
    \]
    Together, the last two inequalities above yield     
    \begin{equation}\label{E:ub_lambda_star}
      |\,\Lambda^*_{v,w}\,|\ \ll_{\alpha,\,k}\ \frac{a}{(\min\{\theta_1,\varphi\})^{\min\{ m_1 - m_{v + 1}, n_1 - n_{w + 1} \}}}
    \end{equation}
    under the assumpt{i}on that $m_1 - m_{v+1} \geq 6$ and $n_1-n_{w\,+\,1} \gg_{\alpha,\,k} 1$.
    
    \subsection{Non-vanishing of linear forms}\label{SS:nonzero}
      Suppose $\Lambda^*_{v,w}=0$. Then,
    \begin{equation}\label{E:lambda*}
      \left(\frac{1}{\sqrt{5}}\sum_{i=1}^v\varphi^{m_i}\right)^a=\sum_{i=1}^w d_ic_{1}^{(j_i)}\theta_{1}^{n_i}.
    \end{equation}
    As the lef{t} side of~\eqref{E:lambda*} lies in $\mathbb{Q}(\sqrt{5})$, the right side lies in $\mathbb{Q}(\theta_{1})$ and our assumpt{i}on was that $\mathbb{Q}(\sqrt{5}) \neq \mathbb{Q}(\theta_{1}) = \mathbb{Q} (\alpha)$, it follows that we have a rational number on both sides. Therefore,
    \[
      \big(\,\sum_{i=1}^v\varphi^{m_i}\,\big)^a\ =\ r_0(\sqrt{5} )^x\quad\text{for some } r_0\in\mathbb{Q} \text{ and } x \in \{ 0, 1 \}.
    \]
    Here, the lef{t} side is a polynomial in $\varphi$ with posi{t}ive integer coefficients.  We can also invoke the subst{i}tut{i}on $\varphi^2=\varphi+1$ to have
    \[
      e_0\frac{(1+\sqrt{5})}{2}+e_1\ =\ r_0(\sqrt{5})^x\quad\text{for some } e_0, e_1 \in \mathbb{N}.
    \]
    This is, however, impossible. In other words, $\Lambda^*_{v,w}\neq 0$ for all $v, w$. \hfill \qed\\
    
    \noindent We are now in a posi{t}ion to apply Proposition~\ref{lfl} with 
    $T=5,\ D = 4$,
    \begin{align*}
        &\delta_1 = \sqrt{5},\ \delta_2 = \varphi,\ \delta_3 = \sum_{i=1}^w d_ic_{1}^{(j_i)}\theta_{1}^{n_i-n_1},\ \delta_4 = \theta_{1},\ \delta_5 = \sum_{i=1}^v\varphi^{m_i-m_1},\\
        &k_1\ =\ a,\ k_2\ = -am_1,\ k_3\ = 1,\ k_4\ =\ n_1 \text{ and } k_5\ =\ -a.
    \end{align*}
    Let us ponder over the heights of some algebraic numbers given above. While $h ( \delta_1 )$ and $h ( \delta_2 )$ are absolute constants, we obtain from~\eqref{E:height_of_delta3} that
    \begin{equation}\label{E:hd3}
      h ( \delta_3 )\ \ll_{\alpha,\,K}\ w\cdot\max\,\{\,n_1 - n_w,\,1\,\}.
    \end{equation}
    Note that $h  ( \delta_4 ) = ( \log \theta_1 ) / 2$ and Proposition~\ref{height_of_poly} read along with~\eqref{height_of_product} tells us
    \[
      h(\delta_5)\ \leq\ (m_1-m_v)h(\varphi)+\log v\ \leq\ 2v(m_1-m_v).
    \]
    This bound is obeyed trivially when $v = 1$. In the light of \eqref{E:n1_ub_m1}, we may take $B = c_{8} am_1$ so as to have $k_j \leq B$ for all $1 \leq j \leq 5$. Thereaf{t}er,~\eqref{E:a_n1} and \eqref{E:m1_ub_n1} will help us to deduce that
    \[
      \log|\Lambda^*_{v,w}|\ \geq -c_{11} \cdot vw \cdot \max\{m_1-m_v,1\} \cdot \max\{n_1-n_w,1\}\log n_1.
    \]
    On comparing this lower bound with \eqref{E:ub_lambda_star}, one gets
    \begin{align}
     \min\,\{\,&m_1 - m_{v + 1},\,n_1 - n_{w + 1}\,\}\label{E:ub_wv_plus1}\\
             &\ll_{\alpha,k} vw\max\{m_1-m_v,1\}\max\{n_1-n_w,1\}\log n_1.\notag
    \end{align}
    To summarize, we have $m_1-m_{v+1}\leq 6$ or $n_1-n_{w+1}\ll_{\alpha,k} 1$ or \eqref{E:ub_wv_plus1} holds true. It must be noted that the f{i}rst two of these possibilit{i}es may be subsumed by the third one af{t}er having a large enough value of the implied mult{i}plicat{i}ve constant.\\[-0.1cm]

    \noindent Ini{t}iate $u(0) = 1$. Let us introduce a double-indexed step counter $( v_j, w_j ),\  j \geq 1$. For $j=1$, we define $( v_1, w_1 )=( 2, 2)$. This corresponds to node $( 1, 1)$ in F{i}gure~\ref{F:flo}.  As hinted before, we begin with an auxiliary upper bound
    \[
      \min\,\{\,m_1 - m_2,\,n_1 - n_2\,\}\ \leq\ C_{12}\log n_1 =: u(1).
    \]
    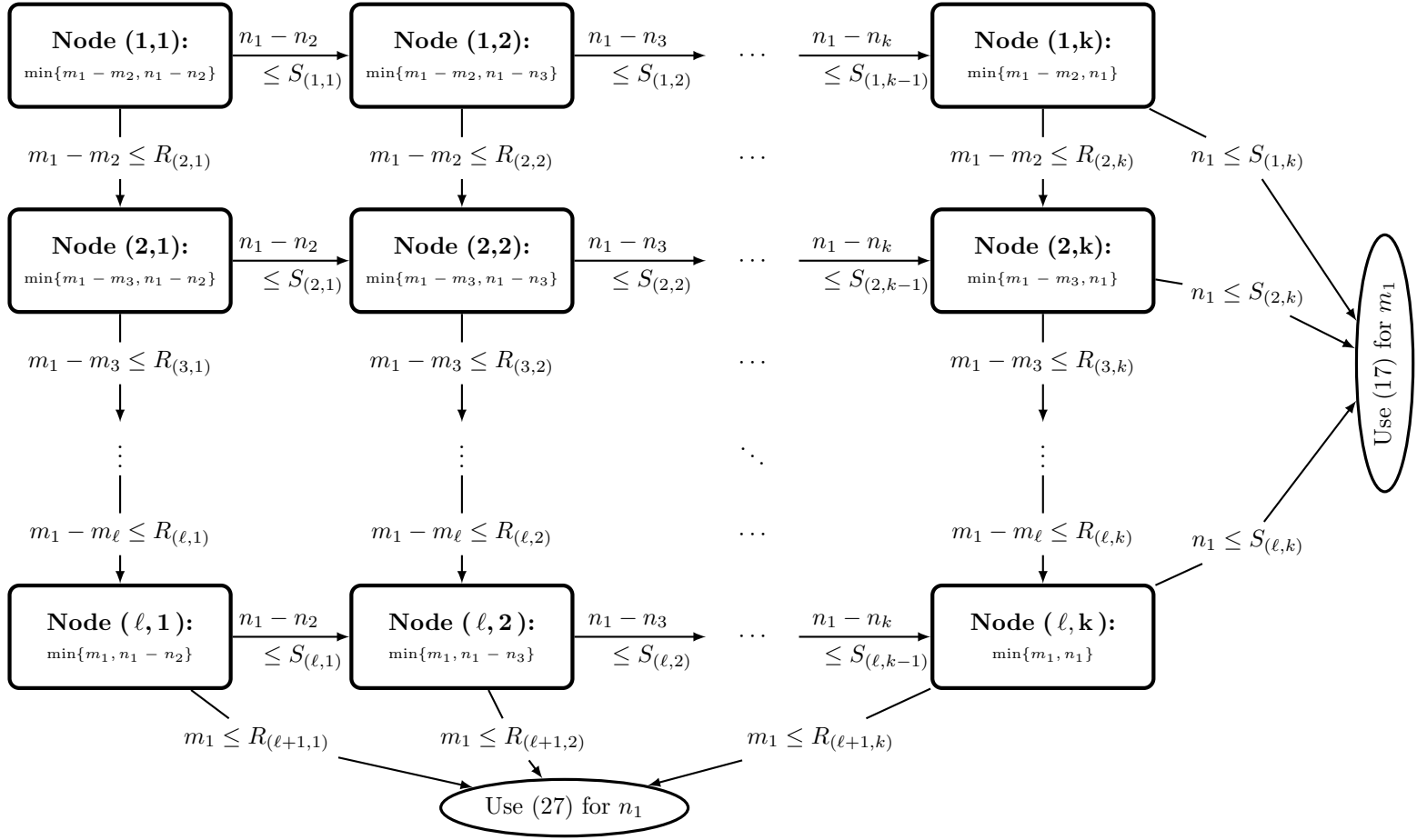
\begin{figure}[p]
      \begin{tikzpicture}[rotate=90,transform shape]
        \node (11) [steps] {{\bf Node (1,1):}\\ \tiny{$\min \{ m_1 - m_2, n_1 - n_2 \}$}};
        \node (12) [steps, right of=11,xshift=4cm] {{\bf Node (1,2):}\\ \tiny{$\min \{ m_1 - m_2, n_1 - n_3 \}$}};
        \draw [thick,-latex] (11) -- node [anchor=south] {$n_1 - n_2\quad$} node[anchor=north] {$\quad\leq S_{( 1, 1 )}$} (12);
        \node (1121) [mid,below of=11,yshift=-0.5cm] {$m_1 - m_2 \leq R_{( 2, 1 )}$};
        \draw [thick] (11) -- (1121);
        \node (21) [steps, below of=1121,yshift=-0.5cm] {{\bf Node (2,1):}\\ \tiny{$\min \{ m_1 - m_3, n_1 - n_2 \}$}};
        \draw[thick,-latex] (1121) -- (21);
        \node (21v1) [mid,below of=21,yshift=-0.5cm] {$m_1 - m_3 \leq R_{( 3, 1 )}$};
        \draw [thick] (21) -- (21v1);
        \node (v1) [mid,below of=21v1,yshift=-0.25cm] {$\vdots$};
        \draw[thick,-latex] (21v1) -- (v1);
        \node (v1l1) [mid,below of=v1,yshift=-0.25cm] {$m_1 - m_{\ell} \leq R_{( \ell, 1 )}$};
        \draw [thick] (v1) -- (v1l1);
        \node (l1) [steps,below of=v1l1,yshift=-0.5cm] {{\bf Node (\,$\mathbf{\ell}$,\,1\,):}\\ \tiny{$\min \{ m_1, n_1 - n_2 \}$}};
        \draw[thick,-latex] (v1l1) -- (l1);

        \node (1222) [mid,right of=1121,xshift=4cm] {$m_1 - m_2 \leq R_{( 2, 2 )}$};
        \draw [thick] (12) -- (1222);
        \node (22) [steps,right of=21,xshift=4cm] {{\bf Node (2,2):}\\ \tiny{$\min \{ m_1 - m_3, n_1 - n_3 \}$}};
        \draw [thick,-latex] (21) -- node [anchor=south] {$n_1 - n_2\quad$} node [anchor=north] {$\quad\leq S_{( 2, 1 )}$} (22);
        \draw[thick,-latex] (1222) -- (22);
        \node (22v2) [mid,below of=22,yshift=-0.5cm] {$m_1 - m_3 \leq R_{( 3, 2 )}$};
        \draw [thick] (22) -- (22v2);
        \node (v2) [mid,below of=22v2,yshift=-0.25cm] {$\vdots$};
        \draw[thick,-latex] (22v2) -- (v2);
        \node (v2l2) [mid,below of=v2,yshift=-0.25cm] {$m_1 - m_{\ell} \leq R_{( \ell, 2 )}$};
        \draw [thick] (v2) -- (v2l2);
        \node (l2) [steps,below of=v2l2,yshift=-0.5cm] {{\bf Node (\,$\mathbf{\ell}$,\,2\,):}\\ \tiny{$\min \{ m_1, n_1 - n_3 \}$}};
        \draw[thick,-latex] (v2l2) -- (l2);
        \draw[thick,-latex] (l1) -- node [anchor=south] {$n_1 - n_2\quad$} node [anchor=north] {$\quad\leq S_{( \ell, 1 )}$} (l2);

        \node (1w) [mid,right of=12,xshift=3.25cm] {$\quad\cdots\quad$};
        \draw [thick,-latex] (12) -- node [anchor=south] {$n_1 - n_3\quad$} node [anchor=north] {$\quad\leq S_{( 1, 2 )}$} (1w);

        \node (2w) [mid,right of=22,xshift=3.25cm] {$\quad\cdots\quad$};
        \draw [thick,-latex] (22) -- node [anchor=south] {$n_1 - n_3\quad$} node [anchor=north] {$\quad\leq S_{( 2, 2 )}$} (2w);

        \node (1k) [steps,right of=1w,xshift=3.25cm] {{\bf Node (1,$\mathbf{k}$):}\\ \tiny{$\min \{ m_1 - m_2, n_1 \}$}};
        \draw [thick,-latex] (1w) -- node [anchor=south] {$n_1 - n_k\quad$} node [anchor=north] {$\quad\leq S_{( 1, k - 1 )}$} (1k);

        \node (2k) [steps,right of=2w,xshift=3.25cm] {{\bf Node (2,$\mathbf{k}$):}\\   \tiny{$\min \{ m_1 - m_3, n_1 \}$}};
        \draw [thick,-latex] (2w) -- node [anchor=south] {$n_1 - n_k\quad$} node [anchor=north] {$\quad\leq S_{( 2, k - 1 )}$} (2k);
    
        \node (1k2k) [mid,below of=1k,yshift=-0.5cm] {$m_1 - m_2 \leq R_{( 2, k )}$};
        \draw [thick] (1k) -- (1k2k);
        \draw[thick,-latex] (1k2k) -- (2k);
        \node [mid,right of=1222,xshift=3.25cm] {$\quad\cdots\quad$};
        \node (2kvk) [mid,below of=2k,yshift=-0.5cm] {$m_1 - m_3 \leq R_{( 3, k )}$};
        \draw [thick] (2k) -- (2kvk);
        \node (vk) [mid,below of=2kvk,yshift=-0.25cm] {$\vdots$};
        \draw[thick,-latex] (2kvk) -- (vk);

        \node (vklk) [mid,below of=vk,yshift=-0.25cm] {$m_1 - m_{\ell} \leq R_{( \ell, k )}$};
        \draw [thick] (vk) -- (vklk);
        \node (lk) [steps,below of=vklk,yshift=-0.5cm] {{\bf Node (\,$\mathbf{\ell, k}$\,):}\\ \tiny{$\min \{ m_1, n_1 \}$}};
        \draw[thick,-latex] (vklk) -- (lk);

        \node [mid,right of=22v2,xshift=3.25cm] {$\quad\cdots\quad$};

        \node (vw) [mid,right of=v2,xshift=3.25cm] {$\quad\ddots\quad$};
        \node (lw) [mid,right of=l2,xshift=3.25cm] {$\quad\cdots\quad$};
        \draw [thick,-latex] (l2) -- node [anchor=south] {$n_1 - n_3\quad$} node [anchor=north] {$\quad\leq S_{( \ell, 2 )}$} (lw);
        \draw[thick,-latex] (lw) -- node [anchor=south] {$n_1 - n_k\quad$} node [anchor=north] {$\quad\leq S_{( \ell, k - 1 )}$} (lk);

        \node [mid,right of=v2l2,xshift=3.25cm] {$\quad\cdots\quad$};
      
        \node (endbelow) [endbox,below of=l2,xshift=1.5cm,yshift=-1.5cm] {Use~\eqref{E:surp} for $n_1$};

        \node (lp11) [mid,below of=l1,yshift=-0.5cm,xshift=2cm] {$m_1 \leq R_{( \ell + 1, 1 )}$};
        \draw[thick] (l1) -- (lp11);

        \node (lp12) [mid,below of=l2,yshift=-0.5cm,xshift=0.75cm] {$m_1 \leq R_{( \ell + 1, 2 )}$};
        \draw[thick] (l2) -- (lp12);
      
        \node (lp1k) [mid,below of=lk,yshift=-0.5cm,xshift=-3.25cm] {$m_1 \leq R_{( \ell + 1, k )}$};
        \draw[thick] (lk) -- (lp1k);
      
        \draw[thick,-latex] (lp11) -- (endbelow);
        \draw[thick,-latex] (lp12) -- (endbelow);
        \draw[thick,-latex] (lp1k) -- (endbelow);

        \node (endright) [endbox,right of=2kvk,xshift=4cm,rotate=90]{Use~\eqref{E:m1_ub_n1} for $m_1$};

        \node (1kp1) [mid,right of=1k,yshift=-1.5cm,xshift=2cm] {$n_1 \leq S_{( 1, k )}$};
        \draw[thick] (1k) -- (1kp1);
        \draw[thick,-latex] (1kp1) -- (endright);

        \node (2kp1) [mid,right of=2k,yshift=-0.5cm,xshift=2cm] {$n_1 \leq S_{( 2, k )}$};
        \draw[thick] (2k) -- (2kp1);
        \draw[thick,-latex] (2kp1) -- (endright);

        \node (lkp1) [mid,right of=lk,yshift=1.375cm,xshift=2cm] {$n_1 \leq S_{( \ell, k )}$};
        \draw[thick] (lk) -- (lkp1);
        \draw[thick,-latex] (lkp1) -- (endright);
      \end{tikzpicture}
      \caption{Intermediate bounds to be computed}\label{F:flo}
    \end{figure}
    We now define the counter in an ad hoc fashion. If
    \[
      \min\,\{\,m_1 - m_{v_j},\,n_1 - n_{w_j}\,\}\ =\ m_1 - m_{v_j},
    \]
    we define $( v_{j+1}, w_{j+1} )=( v_j+1, w_j )$ and move to the node one step towards the bottom. Else, declare $( v_{j+1}, w_{j+1} )=( v_j, w_j+1 )$ and move to the node one step towards the right. Note that we have 
    \[
      v_{j + 1} + w_{j + 1} = v_j + w_j + 1,\ \text{ for all } j.
    \]
    Continue until 
    \[
    v_j=\ell+1\ \textrm{ and }\ m_1\leq n_1-n_{w_j}
    \]
    which causes us to exit via node $( \ell, w_j - 1 )$ belonging to the last row, or
    \[
    w_j=k+1\ \textrm{ and }\ n_1\leq m_1-m_{v_j}
    \]
    which leads to our exit via node $( v_j - 1, k )$ belonging to the last column of F{i}gure~\ref{F:flo}. It is clear that we will exit in at most $k+\ell - 1$ steps. For $j\geq 2$, define
    \begin{equation}\label{E:uj}
      u (j)\ =\ C_{12} ( v_j - 1 ) ( w_j - 1 ) u ( j - 1 ) u ( j - 2 ) \log n_1.
    \end{equation}
    On reaching node $( v_j - 1, w_j - 1 )$, one encounters $\min \{ m_1 - m_{v_j}, n_1 - n_{w_j} \}$ in~\eqref{E:ub_wv_plus1} where each of the two $\max$ terms on the right must have been necessarily computed at two earlier stages during the walk irrespect{i}ve of the path taken so far. The upper bound obtained at the $j$-th step of the walk does not exceed $u(j)$. We should be careful to f{i}x $C_{12}$ large enough so as to ensure $u(j + 1) \geq u(j) \geq 1$ for all $j$.
    \begin{lemma}\label{L:ub}
      For all $j \geq 0$, the upper bounds $u (j)$ sat{i}sfy
      \[
        u(j)\ \leq\ C_{12}^{F_{j + 2} - 1}  ( \ell k )^{F_{j + 1} - 1} ( \log n_1 )^{F_{j + 2} - 1}
      \]
      where $F_j$'s are F{i}bonacci numbers as indexed at the beginning of this art{i}cle.
    \end{lemma}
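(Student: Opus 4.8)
The plan is to induct on $j$, exploiting the fact that the second-order recursion~\eqref{E:uj} for $u(j)$ propagates the Fibonacci recurrence at the level of exponents. First I would isolate the two bookkeeping facts that make everything go through. Along the walk one has $v_j + w_j = v_1 + w_1 + (j - 1) = j + 3$, and the walk is halted precisely when $v_j$ would reach $\ell + 1$ or $w_j$ would reach $k + 1$; hence at every node that is actually visited $v_j - 1 \leq \ell$ and $w_j - 1 \leq k$, so $(v_j - 1)(w_j - 1) \leq \ell k$. Second, $C_{12}$ was chosen $\geq 1$ (indeed, large enough to guarantee $u(j + 1) \geq u(j) \geq 1$) and we may assume $\log n_1 \geq 1$, so all quantities appearing below are $\geq 1$ and the relevant inequalities can be multiplied without reversing direction.

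Next I would dispose of the base cases. Recall $F_0 = 0$ and $F_1 = F_2 = 1$, $F_3 = 2$. For $j = 0$ the claimed inequality reads $u(0) \leq C_{12}^{0}(\ell k)^{0}(\log n_1)^{0} = 1$, which is the definition $u(0) = 1$; for $j = 1$ it reads $u(1) \leq C_{12}^{1}(\ell k)^{0}(\log n_1)^{1} = C_{12}\log n_1$, again the defining equality for $u(1)$.

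For the inductive step, fix $j \geq 2$ and assume the bound for the indices $j - 1$ and $j - 2$. Inserting $(v_j - 1)(w_j - 1) \leq \ell k$ into~\eqref{E:uj} yields
\[
  u(j)\ \leq\ C_{12}\,\ell k\,u(j - 1)\,u(j - 2)\,\log n_1,
\]
and substituting the two inductive hypotheses and collecting the three kinds of factors, the exponent of $C_{12}$ becomes $1 + (F_{j + 1} - 1) + (F_j - 1) = F_{j + 1} + F_j - 1 = F_{j + 2} - 1$, the exponent of $\ell k$ becomes $1 + (F_j - 1) + (F_{j - 1} - 1) = F_j + F_{j - 1} - 1 = F_{j + 1} - 1$, and the exponent of $\log n_1$ becomes $1 + (F_{j + 1} - 1) + (F_j - 1) = F_{j + 2} - 1$. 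This is exactly the asserted bound for $u(j)$, completing the induction.

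I do not anticipate a genuine obstacle here; the only point that requires attention is that the recursion for $u$ is second order, so the induction must carry two consecutive base cases and a two-step hypothesis, and one must invoke $F_{j + 1} + F_j = F_{j + 2}$ for the exponents of $C_{12}$ and of $\log n_1$, and $F_j + F_{j - 1} = F_{j + 1}$ for the exponent of $\ell k$. Everything else reduces to the normalizations $C_{12} \geq 1$ and $\log n_1 \geq 1$ already in force from the paragraph preceding the lemma.
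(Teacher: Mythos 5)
Your proof is correct and follows essentially the same route as the paper: strong (two-step) induction on $j$, bounding $(v_j-1)(w_j-1)\leq \ell k$ in~\eqref{E:uj} and using $F_{j+1}+F_j=F_{j+2}$ (resp.\ $F_j+F_{j-1}=F_{j+1}$) to combine the exponents of $C_{12}$, $\log n_1$ and $\ell k$. The base cases $u(0)=1$, $u(1)=C_{12}\log n_1$ and the exponent arithmetic match the paper's computation exactly, so there is nothing to add.
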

    \begin{proof}
      Our claim is clearly true for $j \leq 1$. Let us assume it to be true for all $0 \leq j \leq j_0$. Then, we have from~\eqref{E:uj} that
      \begin{align*}
        u ( j_0 + 1 )\ &=\ C_{12} ( v_{j_0 + 1} - 1 ) ( w_{j_0 + 1} - 1 ) u ( j_0 ) u ( j_0 - 1 ) \log n_1\\
        &\leq\ C_{12}\cdot\ell k\cdot u ( j_0 ) u ( j_0 - 1 ) \log n_1\\
        &\leq\ C_{12}^{1 + F_{j_0 + 2} - 1 + F_{j_0 + 1} - 1}  ( \ell k )^{1 + F_{j_0 + 1} - 1 + F_{j_0} - 1} ( \log n_1 )^{1 + F_{j_0 + 2} - 1 + F_{j_0 + 1} - 1}\\
        &\leq\ C_{12}^{F_{j_0 + 3} - 1}  ( \ell k )^{F_{j_0 + 2} - 1} ( \log n_1 )^{F_{j_0 + 3} - 1}.
      \end{align*}
      By the principle of mathemat{i}cal induct{i}on, we are done.
    \end{proof}

    \noindent Taking $j=k+\ell - 1$, we f{i}nd that $\min \{ m_1, n_1 \} \ll_{\alpha,k,\ell} (\log n_1)^{F_{k+\ell+1}}$. In case $m_1 \leq n_1$, we will use \eqref{E:surp} to have an upper bound for $n_1$ as well. Thereaf{t}er or otherwise,
    one can apply Lemma~\ref{L:PW} to get an effectively computable upper bound for $n_1$ depending only on $\alpha, K$ and $\ell$. It then becomes apparent from~\eqref{E:a_n1} that the size of any solut{i}on to the system of equations \eqref{E:ya} and \eqref{E:y} is bounded above by some ef{f}ect{i}vely computable constant which depends only on $\alpha, K$ and $\ell$. This concludes the proof of Theorem~\ref{Th:Ham}. \hfill \qed
    
  \section{Bound in terms of radix Hamming weight of $y$}\label{sec_thm3}
    \noindent Our proof of Theorem \ref{Th:Ham2} is analogous to that of Theorem \ref{Th:Ham}. As such, we only provide a brief outline here.\\[-0.1cm]

    \noindent Let us recall that~\eqref{E:ya} was rewritten as~\eqref{E:ya_modified} to have simultaneous equations~\eqref{E:ya_modified} and~\eqref{E:y2}. We first record some elementary inequalities in the same spirit as~\S\,\ref{SubS:EI}. The greedy property of $b$-ary expansions dictates that $y < b^{m_1 + 1}$ which begets
    \begin{equation}\label{E2:logy_ub_m1}
        \log y\ <\ ( m_1 + 1 ) \log b.
    \end{equation}
    Next, the chain of inequali{t}ies
     \[
      b^{m_1} \leq\  y\ <\ y^a\ \leq\ Kc_{3}\theta_{1}^{n_1}
    \]
    tells us
    \begin{equation}\label{E2:m1_ub_n1}
      m_1\ \leq\ c_{7}' n_1,\quad\textrm{where }\ c_{7}' := \frac{\log^+(K c_{3})+\log\theta_{1}}{\log b}.
    \end{equation}
    Similar bounds for the $a$-th power of $y$ are given by
   \[
      c_{4}\theta_{1}^{n_1}\ \leq\ y^a\ <\ b^{(m_1+1)a}
    \]
    from which we obtain
    \begin{equation}\label{E2:n1_ub_m1}
      n_1\ \leq\ c_{8}' \cdot a ( m_1 + 1 ),\quad\textrm{where }\ c_{8}' := \max\,\bigl\{\,\frac{\log b+\log^+(1/c_{4})}{\log\theta_{1}},\,1\,\bigr\}.
    \end{equation}
    In view of \S \ref{SubS:k1}, we assume $k\geq 2$. It now follows from~\eqref{E:thm2_conclusion} and~\eqref{E2:logy_ub_m1} that 
    \[
      n_1\ \ll_{\alpha,\,k,\,b}\ ( m_1 + 1 )^{k} \bigl(\,\log^+\! ( m_1 + 1 ) \bigr)^k.
    \]
    Following the same steps taken in \S \ref{sec_thm1}, we construct
 $\ell$ linear forms in logarithms  from~\eqref{E:y2}.  For $v \in \{ 1,\,\ldots,\,\ell\,\}$, let
  \[
    \Gamma_{B,\,v} := y D_1^{-1} b^{-m_1} \left(\sum_{i=1}^v D_iD_1^{-1}b^{m_i-m_1}\right)^{-1}\quad\text{and}\quad\Lambda_{B,\,v} := \log \Gamma_{B,\,v}.
  \]
 Thereaf{t}er, Equat{i}on~\eqref{E:y2} in Theorem~\ref{Th:Ham2} can be used to have
    \[
        \bigl|\,\Gamma_{B,v}-1\,\bigr|\ =\ \frac{\sum_{i=v+1}^{\ell} D_ib^{m_i}}{\sum_{i=1}^v D_ib^{m_i}}\ \leq\ \frac{b^{m_{v + 1} + 1}}{b^{m_1}}\ =\ \frac{b}{b^{m_1-m_{v+1}}},
    \]
    where $m_{\ell+1}$ is again set to be $0$. If $m_1-m_{v+1}\geq 2$, we leverage~\eqref{E:G2L} to get
    \begin{equation}\label{E:Bv_ub}
      |\,\Lambda_{B,\,v}\,|\ \leq\ \frac{2b}{b^{m_1-m_{v+1}}}.
    \end{equation}
For $w \in \{ 1,\,\ldots,\,k \}$ and $v \in \{ 1,\,\ldots,\,\ell\,\}$, let
    \begin{align*}
    \Lambda^*_{v,\,w} := a\Lambda_{B,\,v}-\Lambda_{A,\,w}\ &=\ -a\log D_1-am_1\log b-a\log(\sum_{i=1}^vD_iD_1^{-1}b^{m_i-m_1})\\
      &\qquad\qquad\qquad+\ n_1\log\theta_{1}+\log(\sum_{i=1}^w d_ic_{1}^{(j_i)}\theta_{1}^{n_i-n_1}),
      \end{align*}
      where $\Lambda_{A,\,w}$ remains as defined before in~\eqref{E:AwBv}.
   From~\eqref{E:Aw_ub} and \eqref{E:Bv_ub}, we obtain
    \begin{equation}\label{E2:ub_lambda_star}
      |\,\Lambda^*_{v,w}\,|\ \ll_{\alpha,\,K,\,b}\ \frac{a}{\bigl(\,\min\{\,b,\,\theta_1\,\}\,\bigr)^{\min\{m_1 - m_{v + 1},\,n_1 - n_{w + 1} \}}}
    \end{equation}
    under the assumpt{i}on that $m_1 - m_{v+1} \geq 2$ and $n_1-n_{w\,+\,1} \gg_{\alpha,\,K} 1$.

    \subsection{Non-vanishing of linear forms} Suppose $\Lambda^*_{v,w}=0$. It is possible only if
    \[
       \sum_{i\,=\,1}^w d_ic_{1}^{(j_i)}\theta_{1}^{n_i}\ =\ \bigl(\sum_{i=1}^v D_ib^{m_i} \bigr)^a\ \in\ \mathbb{Q}.
    \]
    Consider the conjugate embedding $\sigma : \mathbb{Q}(\theta_{1}) \to \mathbb{C}$ mapping $\theta_{1}$ to $\theta_{2}$. One gets
    \begin{align*}
      \sum_{i\,=\,1}^w d_ic_{1}^{(j_i)}\theta_{1}^{n_i}\ &=\ \bigl( \sum_{i=1}^v D_ib^{m_i} \bigr)^a\\
      &=\ \sigma (\sum_{i=1}^v D_ib^{m_i})^a\ =\ \sigma \big(\,\sum_{i\,=\,1}^w d_ic_{1}^{(j_i)}\theta_{1}^{n_i}\,\big)\ =\ -\sum_{i\,=\,1}^w d_ic_{2}^{(j_i)}\theta_{2}^{n_i}.
    \end{align*}
    As $c_1^{(j)} > 0$ for all $j$ and $d_i \geq 1$ for all $1 \leq i \leq k$, we obtain
    \[
      c_{1}^{(j_1)}\theta_{1}^{n_1}\ <\ \left|\,\sum_{i\,=\,1}^w d_ic_{2}^{(j_i)}\theta_{2}^{n_i}\,\right|\ \ll_{\alpha,\,K}\ 1
    \]
    whereby $n_1\ll_{\alpha,\,K} 1$. We may again utilize~\eqref{E:a_n1} to argue that the size of any solut{i}on to~\eqref{E:ya} is bounded above by some ef{f}ect{i}vely computable constant depending only on $\alpha$ and $K$. Consequently, Theorem~\ref{Th:y} holds in this case. Hence, we will hereaf{t}er assume that $\Lambda^*_{v,w}\neq 0$ for all $v, w$. \hfill \qed\\
    
    \noindent We can now apply Proposition~\ref{lfl} with 
    $T=5,\ D=2$,
    \begin{align*}
        &\delta_1 = D_1,\ \delta_2 = b,\ \delta_3 = \sum_{i=1}^w d_ic_{1}^{(j_i)}\theta_{1}^{n_i-n_1},\ \delta_4 = \theta_{1},\ \delta_5 = \sum_{i=1}^vD_iD_1^{-1}b^{m_i-m_1},\\
        &k_1\ =\ -a,\ k_2\ = -am_1,\ k_3\ = 1,\ k_4\ =\ n_1 \text{ and } k_5\ =\ -a.
    \end{align*}
    As in~\S\,\ref{sec_thm1}, the required heights $h ( \delta_1 )$ and $h ( \delta_2 )$ are absolute constants. Further, $h ( \delta_3 )$ has already been obtained  in~\eqref{E:hd3}, $h  ( \delta_4 ) = ( \log \theta_1 ) / 2$ and Proposition~\ref{height_of_poly} together with~\eqref{height_of_product} tells us that
    \[
      h ( \delta_5 )\ \leq\ ( m_1 - m_v ) \log b + 2\log\,\bigl(\,\sum_{i=1}^vD_i\,\bigr)\ \leq\ 5 ( m_1 - m_v ) \log^+\!b.
    \]
    In the light of \eqref{E2:n1_ub_m1}, we may take $B = c_{8}' a ( m_1 + 1 )$ so as to have $k_j \leq B$ for all $1 \leq j \leq 5$. Thereaf{t}er,~\eqref{E:a_n1} and \eqref{E2:m1_ub_n1} will help us to deduce that
    \[
      \log|\Lambda^*_{v,w}|\ \geq\ -c_{11} \cdot vw \cdot \max\{m_1-m_v,1\} \cdot \max\{n_1-n_w,1\}\log n_1.
    \]
    On comparing this lower bound against~\eqref{E2:ub_lambda_star}, one gets
    \begin{align}
     \min\,\{\,&m_1 - m_{v + 1},\,n_1 - n_{w + 1}\,\}\label{E2:ub_wv_plus1}\\
             &\ll_{\alpha,\,K,\,b} vw\max\{m_1-m_v,1\}\max\{n_1-n_w,1\}\log n_1.\notag
    \end{align}
    To summarize, we have $m_1-m_{v+1}\leq 2$ or $n_1-n_{w+1}\ll_{\alpha,k} 1$ or \eqref{E2:ub_wv_plus1} holds true. This is similar to our observat{i}on following~\eqref{E:ub_wv_plus1} and we can proceed exactly as in \S \ref{sec_thm1} to complete the proof of Theorem~\ref{Th:Ham2}. \hfill \qed

  \bibliographystyle{plain}
  \bibliography{SS26}

@article {AHR_21,
    AUTHOR = {Aboudja, Hyacinthe and Hernane, Mohand and Rihane, Salah
              Eddine and Togb\'e, Alain},
     TITLE = {On perfect powers that are sums of two {P}ell numbers},
   JOURNAL = {Period. Math. Hungar.},
  FJOURNAL = {Periodica Mathematica Hungarica. Journal of the J\'anos Bolyai
              Mathematical Society},
    VOLUME = {82},
      YEAR = {2021},
    NUMBER = {1},
     PAGES = {11--15},
      ISSN = {0031-5303,1588-2829},
   MRCLASS = {11D61 (11B39)},
  MRNUMBER = {4219400},
MRREVIEWER = {L\'aszl\'o\ Szalay},
       DOI = {10.1007/s10998-020-00342-1},
       URL = {https://doi.org/10.1007/s10998-020-00342-1},
}

@article{BRP_22,
AUTHOR={Bhoi, Pritam Kumar and Rout, Sudhansu Sekhar and Panda, Gopal
              Krishna},
TITLE={On the resolution of the Diophantine equation {$U_n+U_m=x^q$}},
JOURNAL={Ramanujan J.},
VOLUME = {66, article no.~36},
YEAR={2025}
}

@article{EEKT_24,
AUTHOR={Earp-Lynch, Benjamin and  Earp-Lynch, Simon and Kihel, Omar  and   Tiebekabe, P.},
TITLE={Powers as {F}ibonacci sums},
JOURNAL={Quaest.~Math.},
FJOURNAL={Quaestiones Mathematicae},
      VOLUME = {},
      YEAR = {2024},
    NUMBER = {},
     PAGES = {1--13},
}

@article{MS24,
      title={On the representat{i}on of an integer in {O}strowski and recurrence numerat{i}on systems}, 
      author={Mohit Mi{t}tal and Divyum Sharma},
      year={2025},
      volume={91},
      pages={546--567},
      url={https://doi.org/10.1007/s10998-025-00670-0},
      journal = {Period.\ Math.\ Hungar.}
}

@book {Petho_81,
    AUTHOR = {Peth\H{o}, A.},
     TITLE = {Perfect powers in second order recurrences},
 BOOKTITLE = {Topics in classical number theory, {V}ol. {I}, {II}
              ({B}udapest, 1981)},
    SERIES = {Colloq. Math. Soc. J\'anos Bolyai},
    VOLUME = {34},
     PAGES = {1217--1227},
 PUBLISHER = {North-Holland, Amsterdam},
      YEAR = {1984},
      ISBN = {0-444-86509-8},
   MRCLASS = {11B37},
  MRNUMBER = {781182},
MRREVIEWER = {C.\ R.\ Pranesachar},
}

@article {KL_21,
    AUTHOR = {Kihel, Omar and Larone, Jesse},
     TITLE = {On the nonnegative integer solutions of the equation {$F_n\pm
              F_m=y^a$}},
   JOURNAL = {Quaest. Math.},
  FJOURNAL = {Quaestiones Mathematicae. Journal of the South African
              Mathematical Society},
    VOLUME = {44},
      YEAR = {2021},
    NUMBER = {8},
     PAGES = {1133--1139},
      ISSN = {1607-3606,1727-933X},
   MRCLASS = {11B39 (11D61 11J86)},
  MRNUMBER = {4316638},
MRREVIEWER = {Mahadi\ Ddamulira},
       DOI = {10.2989/16073606.2020.1775155},
       URL = {https://doi.org/10.2989/16073606.2020.1775155},
}

@article {Petho_Weger_86,
    AUTHOR = {Peth\H{o}, A. and de Weger, B.~M.~M.},
     TITLE = {Products of prime powers in binary recurrence sequences. {I}. {T}he hyperbolic case, with an applicat{i}on to the generalized {R}amanujan-{N}agell equation.},
   JOURNAL = {Mathematics of Computation},
  FJOURNAL = {Journal of Number Theory},
    VOLUME = {47},
      YEAR = {1986},
    NUMBER = {176},
     PAGES = {713--727},
       DOI = {10.1090/S0025-5718-1986-0856715-5},
       URL = {https://doi.org/10.1090/S0025-5718-1986-0856715-5},
}

@article {CHHM+,
    AUTHOR = {Cordwell, Katherine and Hlavacek, Max and Huynh, Chi and
              Miller, Steven J. and Peterson, Carsten and Vu, Yen Nhi
              Truong},
     TITLE = {Summand minimality and asymptotic convergence of generalized {Z}eckendorf decompositions},
   JOURNAL = {Res.~Number Theory},
  FJOURNAL = {Research in Number Theory},
    VOLUME = {4},
      YEAR = {2018},
    NUMBER = {4},
     PAGES = {27~pp.}
}

@article {Pe_82,
    AUTHOR = {Peth\H{o}, Attila},
     TITLE = {Perfect powers in second order linear recurrences},
   JOURNAL = {J. Number Theory},
  FJOURNAL = {Journal of Number Theory},
    VOLUME = {15},
      YEAR = {1982},
    NUMBER = {1},
     PAGES = {5--13},
      ISSN = {0022-314X,1096-1658},
   MRCLASS = {10B25 (10A35)},
  MRNUMBER = {666345},
MRREVIEWER = {H.\ L.\ Abbott},
       DOI = {10.1016/0022-314X(82)90079-8},
       URL = {https://doi.org/10.1016/0022-314X(82)90079-8},
}

@article {Ze_72,
    AUTHOR = {Zeckendorf, E.},
     TITLE = {Repr\'esentation des nombres naturels par une somme de nombres de Fibonacci ou de nombres de Lucas},
   JOURNAL = {Bull. Soc. Roy. Sci. Li\`ege},
    VOLUME = {41},
      YEAR = {1972},
     PAGES = {179--182}
}

@article {BL_16,
    AUTHOR = {Bravo, Jhon J. and Luca, Florian},
     TITLE = {On the {D}iophantine equation {$F_n+F_m=2^a$}},
   JOURNAL = {Quaest. Math.},
  FJOURNAL = {Quaestiones Mathematicae. Journal of the South African
              Mathematical Society},
    VOLUME = {39},
      YEAR = {2016},
    NUMBER = {3},
     PAGES = {391--400},
      ISSN = {1607-3606,1727-933X},
   MRCLASS = {11B39 (11D61 11J86)},
  MRNUMBER = {3510734},
MRREVIEWER = {L\'aszl\'o\ Szalay},
       DOI = {10.2989/16073606.2015.1070377},
       URL = {https://doi.org/10.2989/16073606.2015.1070377},
}

@article {Zi_23,
    AUTHOR = {Ziegler, Volker},
     TITLE = {Sums of {F}ibonacci numbers that are perfect powers},
   JOURNAL = {Quaest. Math.},
  FJOURNAL = {Quaestiones Mathematicae. Journal of the South African
              Mathematical Society},
    VOLUME = {46},
      YEAR = {2023},
    NUMBER = {8},
     PAGES = {1717--1742},
      ISSN = {1607-3606,1727-933X},
   MRCLASS = {11D61 (11B39 11J86 11Y50)},
  MRNUMBER = {4623323},
MRREVIEWER = {Mahadi\ Ddamulira},
       DOI = {10.2989/16073606.2022.2109220},
       URL = {https://doi.org/10.2989/16073606.2022.2109220},
}

@article {BMS_06,
    AUTHOR = {Bugeaud, Yann and Mignotte, Maurice and Siksek, Samir},
     TITLE = {Classical and modular approaches to exponential {D}iophantine
              equations. {I}. {F}ibonacci and {L}ucas perfect powers},
   JOURNAL = {Ann. of Math. (2)},
  FJOURNAL = {Annals of Mathematics. Second Series},
    VOLUME = {163},
      YEAR = {2006},
    NUMBER = {3},
     PAGES = {969--1018},
      ISSN = {0003-486X,1939-8980},
   MRCLASS = {11D61 (11B39 11D59 11J86)},
  MRNUMBER = {2215137},
MRREVIEWER = {Yuri\ Bilu},
       DOI = {10.4007/annals.2006.163.969},
       URL = {https://doi.org/10.4007/annals.2006.163.969},
}

@article {KKLL_21,
    AUTHOR = {Kebli, Salima and Kihel, Omar and Larone, Jesse and Luca,
              Florian},
     TITLE = {On the nonnegative integer solutions to the equation {$F_ n\pm
              F_m = y^a$}},
   JOURNAL = {J. Number Theory},
  FJOURNAL = {Journal of Number Theory},
    VOLUME = {220},
      YEAR = {2021},
     PAGES = {107--127},
      ISSN = {0022-314X,1096-1658},
   MRCLASS = {11B39 (11D61 11J86)},
  MRNUMBER = {4177538},
MRREVIEWER = {Mahadi\ Ddamulira},
       DOI = {10.1016/j.jnt.2020.08.004},
       URL = {https://doi.org/10.1016/j.jnt.2020.08.004},
}

@article {LP18,
    AUTHOR = {Luca, Florian and Patel, Vandita},
     TITLE = {On perfect powers that are sums of two {F}ibonacci numbers},
   JOURNAL = {J. Number Theory},
  FJOURNAL = {Journal of Number Theory},
    VOLUME = {189},
      YEAR = {2018},
     PAGES = {90--96},
      ISSN = {0022-314X,1096-1658},
   MRCLASS = {11D61 (11B39)},
  MRNUMBER = {3788641},
MRREVIEWER = {L\'aszl\'o\ Szalay},
       DOI = {10.1016/j.jnt.2018.02.003},
       URL = {https://doi.org/10.1016/j.jnt.2018.02.003},
}

@article {VZ_24,
    AUTHOR = {Vukusic, Ingrid and Ziegler, Volker},
     TITLE = {On sums of two {F}ibonacci numbers that are powers of numbers
              with limited {H}amming weight},
   JOURNAL = {Quaest. Math.},
  FJOURNAL = {Quaestiones Mathematicae. Journal of the South African
              Mathematical Society},
    VOLUME = {47},
      YEAR = {2024},
    NUMBER = {4},
     PAGES = {851--869},
      ISSN = {1607-3606,1727-933X},
   MRCLASS = {11B39 (11D61 11J86)},
  MRNUMBER = {4739972},
       DOI = {10.2989/16073606.2023.2256477},
       URL = {https://doi.org/10.2989/16073606.2023.2256477},
}

@book {AS,
    AUTHOR = {Allouche, Jean-Paul and Shallit, Jeffrey},
     TITLE = {Automatic sequences},
      NOTE = {Theory, applications, generalizations},
 PUBLISHER = {Cambridge University Press, Cambridge},
      YEAR = {2003},
     PAGES = {xvi+571},
      ISBN = {0-521-82332-3},
   MRCLASS = {11B85 (11Z05 37A45 37B10 68Q45 68R15 94A45)},
  MRNUMBER = {1997038},
MRREVIEWER = {Val\'{e}rie\ Berth\'{e}},
       DOI = {10.1017/CBO9780511546563},
       URL = {https://doi.org/10.1017/CBO9780511546563},
}

@article {LeSh93,
    AUTHOR = {Lenstra, Jr., H. W. and Shallit, J. O.},
     TITLE = {Continued fractions and linear recurrences},
   JOURNAL = {Math. Comp.},
  FJOURNAL = {Mathematics of Computation},
    VOLUME = {61},
      YEAR = {1993},
    NUMBER = {203},
     PAGES = {351--354},
      ISSN = {0025-5718,1088-6842},
   MRCLASS = {11A55 (11B37)},
  MRNUMBER = {1192972},
MRREVIEWER = {Les\ Davison},
       DOI = {10.2307/2152958},
       URL = {https://doi.org/10.2307/2152958},
}

@article {BCM_13,
    AUTHOR = {Bugeaud, Yann and Cipu, Mihai and Mignotte, Maurice},
     TITLE = {On the representation of {F}ibonacci and {L}ucas numbers in an
              integer base},
   JOURNAL = {Ann. Math. Qu\'{e}.},
  FJOURNAL = {Annales Math\'{e}matiques du Qu\'{e}bec},
    VOLUME = {37},
      YEAR = {2013},
    NUMBER = {1},
     PAGES = {31--43}
}

@article {Os_1922,
    AUTHOR = {Ostrowski, Alexander},
     TITLE = {Bemerkungen zur {T}heorie der {D}iophantischen
              {A}pproximationen},
   JOURNAL = {Abh. Math. Sem. Univ. Hamburg},
  FJOURNAL = {Abhandlungen aus dem Mathematischen Seminar der
              Universit\"{a}t Hamburg},
    VOLUME = {1},
      YEAR = {1922},
    NUMBER = {1},
     PAGES = {77--98},
      ISSN = {0025-5858,1865-8784},
   MRCLASS = {99-04},
  MRNUMBER = {3069389},
       DOI = {10.1007/BF02940581},
       URL = {https://doi.org/10.1007/BF02940581},
}

@article {Mat1,
    AUTHOR = {Matveev, E. M.},
     TITLE = {An explicit lower bound for a homogeneous rational linear form
              in logarithms of algebraic numbers},
   JOURNAL = {Izv. Ross. Akad. Nauk Ser. Mat.},
  FJOURNAL = {Izvestiya Rossiiskoi Akademii Nauk. Seriya Matematicheskaya},
    VOLUME = {62},
      YEAR = {1998},
    NUMBER = {4},
     PAGES = {81--136},
      ISSN = {1607-0046,2587-5906},
   MRCLASS = {11J86},
  MRNUMBER = {1660150},
MRREVIEWER = {Yuri\ Bilu},
       DOI = {10.1070/im1998v062n04ABEH000190},
       URL = {https://doi.org/10.1070/im1998v062n04ABEH000190},
}

@article {Mat2,
    AUTHOR = {Matveev, E. M.},
     TITLE = {An explicit lower bound for a homogeneous rational linear form
              in logarithms of algebraic numbers. {II}},
   JOURNAL = {Izv. Ross. Akad. Nauk Ser. Mat.},
  FJOURNAL = {Izvestiya Rossiiskoi Akademii Nauk. Seriya Matematicheskaya},
    VOLUME = {64},
      YEAR = {2000},
    NUMBER = {6},
     PAGES = {125--180},
      ISSN = {1607-0046,2587-5906},
   MRCLASS = {11J86},
  MRNUMBER = {1817252},
MRREVIEWER = {Yuri\ Bilu},
       DOI = {10.1070/IM2000v064n06ABEH000314},
       URL = {https://doi.org/10.1070/IM2000v064n06ABEH000314},
}

@book {Wa_00,
    AUTHOR = {Waldschmidt, Michel},
     TITLE = {Diophantine approximation on linear algebraic groups},
    SERIES = {Grundlehren der mathematischen Wissenschaften [Fundamental
              Principles of Mathematical Sciences]},
    VOLUME = {326},
      NOTE = {Transcendence properties of the exponential function in
              several variables},
 PUBLISHER = {Springer-Verlag, Berlin},
      YEAR = {2000},
     PAGES = {xxiv+633},
      ISBN = {3-540-66785-7},
   MRCLASS = {11Jxx (20G99)},
  MRNUMBER = {1756786},
MRREVIEWER = {Yann\ Bugeaud},
       DOI = {10.1007/978-3-662-11569-5},
       URL = {https://doi.org/10.1007/978-3-662-11569-5},
}

@article {Zi_19,
    AUTHOR = {Ziegler, Volker},
     TITLE = {Effective results for linear equations in members of two
              recurrence sequences},
   JOURNAL = {Acta Arith.},
  FJOURNAL = {Acta Arithmetica},
    VOLUME = {190},
      YEAR = {2019},
    NUMBER = {2},
     PAGES = {139--169},
      ISSN = {0065-1036,1730-6264},
   MRCLASS = {11D61 (11A67 11B37 11B39)},
  MRNUMBER = {3984263},
MRREVIEWER = {P.\ Bundschuh},
       DOI = {10.4064/aa180427-13-11},
       URL = {https://doi.org/10.4064/aa180427-13-11},
}
\end{document}